\documentclass[11pt]{article}

\usepackage[margin=1.3in]{geometry}
\usepackage{enumitem}



\newcommand{\cX}{\mathsf{X}}

\newcommand{\R}{\mathbb{R}}

\newcommand{\bP}{\mathbf{P}}

\newcommand{\rint}{\operatorname{int}}

\newcommand{\N}{\mathbb{N}}
\newcommand{\diag}{\operatorname{diag}}

\usepackage{dsfont}
\usepackage{amssymb}
\usepackage{amsmath}
\usepackage{url}
\usepackage{xcolor}
\usepackage{amsthm}
\usepackage{caption}
\usepackage{subcaption}
\usepackage{graphicx}

\usepackage{tikz}
\usetikzlibrary{shapes,arrows,graphs,graphs.standard,shapes.misc}
\usetikzlibrary{matrix,decorations.pathreplacing, calc, positioning,fit}
\usetikzlibrary{shapes.geometric}
\usetikzlibrary{decorations.markings}

\usepackage{pgfplots}
\pgfplotsset{compat=1.8}
\usepackage{pgfplotstable}
\usetikzlibrary{3d, calc, decorations.markings}
\usepackage{placeins}
\pgfplotsset{compat=1.17}

\usepackage[
    style=numeric-comp,
    hyperref=true,
    doi=false,
    url=false,
    isbn=false,
    firstinits=true,
    sorting=none,
    block=none,
    backend=bibtex,
    maxnames=99
]{biblatex}

\bibliography{references}

\newtheorem{theorem}{Theorem}
\newtheorem{lemma}{Lemma}
\newtheorem{proposition}{Proposition}
\newtheorem{definition}{Definition}
\newtheorem{remark}{Remark}

\title{Convergence rate of $H$-property for step-graphons}

\author{Wanting Gao \quad Hong Hu \quad and \quad Xudong Chen
\thanks{The authors are with the Electrical and Systems Engineering, Washington University in St. Louis. Emails: \texttt{\{g.wanting, hu.h, cxudong\}@wustl.edu}.}%
}

\date{}

\begin{document}

\maketitle

\begin{abstract}
A graphon is said to have the $H$-property if a random undirected graph $G_n$ on $n$ nodes sampled from it has a node-wise disjoint cycle cover almost surely as $n\to\infty$. 
It has been shown in the earlier work that the $H$-property obeys the zero-one law, i.e., the probability that the random graph has a cycle cover tends to either one or zero.  
In this paper, we sharpen the result by characterizing the convergence rate of the probability. Specifically, we show that there are two different types of rates, with one being exponential and the other being root $n$. We provide a rigorous proof and numerical validation.
\end{abstract}

\section{Introduction and Main Result}

In this paper, we investigate the convergence rate of $H$-property for step-graphons. Roughly speaking, a step-graphon $W$ has the (weak) $H$-property if a random graph $G_n$ on $n$ nodes sampled from $W$ has a node-wise disjoint cycle cover (i.e., a node-wise disjoint union of cycles that visit every node of the graph) almost surely as $n\to \infty$.   In~\cite{belabbas2021h,belabbas2023geometric}, it is shown that the $H$-property obeys the zero-one law, i.e., for almost all step-graphons, the probability $P_n(W)$ that $G_n\sim W$ has a cycle cover converges to either zero or one. A complete characterization of the zero-one law has also been obtained, for which we have introduced key objects such as concentration vector, skeleton graph, and edge cone and used these objects to establish necessary and sufficient conditions for the $H$-property. 
The residual case where the probability converges to neither zero nor one has also been addressed in~\cite{gao2025h}. We recall these key objects and the zero-one law in Subsection~\ref{ssec:Hproperty}. 

The importance of cycle cover for networked control system lies in the fact that it is necessary and essentially sufficient (together with some other mild conditions) for a network topology to sustain asymptotic stability~\cite{kirkoryan2014decentralized} and ensemble controllability~\cite{chen2021sparse}. 
Understanding how likely a random network topology can admit a cycle cover is key to evaluating the risk-to-reward ratio for a network manager to deploy a large-scale multi-agent system in an uncertain environment.  We refer the reader to the tutorial paper~\cite{belabbas2025structural} for details.    

The main contribution of this paper is to characterize the convergence rate of $P_n(W)$. Specifically, we establish the following results, all of which are formulated in Theorem~\ref{thm:convergencerate}, Subsection~\ref{ssec:mainresult}:  
\begin{enumerate}
\item If $P_n(W)$ converges to $1$, then it converges exponentially fast.
\item If $P_n(W)$ converges to $0$, then there are two subcases:
\begin{enumerate}
\item If the concentration vector does not belong to the edge cone, then $P_n(W)$ converges to $0$ exponentially fast.
\item If the concentration vector is on the boundary of the edge cone and if the edge cone is degenerate, then $P_n(W) \sim O(n^{-1/2})$.
\end{enumerate}
\item If $P_n(W)$ converges to $p^*$ for some $p^*\in (0,1)$, then $|P_n(W) - p^*| \sim O(n^{-1/2})$.
\end{enumerate}

\vspace{.1cm}

\noindent\textit{Notation.} We gather here a few notations that are used throughout the paper. 
For an undirected graph $G$, let $V(G)$ and $E(G)$ be its node set and edge set. We use $(v_i,v_j)$ to denote the edge between $v_i$ and $v_j$. If $v_i = v_j$, then $(v_i,v_i)$ is a self-loop. 
For $x = (x_1, \dots, x_n) \in \mathbb{R}^n$, let diag($x$) be the $n \times n$ diagonal matrix whose $ii$-th entry is $x_i$. We use $e_1,\ldots, e_n$ to denote the standard basis of $\R^n$, where  $e_i$ is the vector with $1$ on the $i$-th entry and $0$ elsewhere.
We denote by $\Delta^{n-1}$ the standard $(n-1)$-simplex in $\R^n$, i.e., the set of all vectors $x\in \R^{n}$ with nonnegative entries and $\sum_{i = 1}^{n}x_i = 1$. For a closed convex set $A \subseteq \mathbb{R}^q$, we denote by $\operatorname{int}(A)$ its relative interior.
  
\subsection{Step-graphon, $H$-property, and key objects}\label{ssec:Hproperty}
A graphon $W:[0,1]^2\to [0,1]$ is a symmetric, measurable function. 
We follow the steps below to sample an undirected graph $G_n$ on $n$ nodes from $W$: 
\begin{enumerate}
    \item Sample $r_1, \ldots, r_n$ i.i.d. from $\text{Uni}[0,1]$, the uniform distribution over the interval $[0,1]$. We call $r_i$ the \emph{coordinate} of node $v_i \in V(G_n)$.
    \item For any two {\em distinct} nodes $v_i$ and $v_j$, place an edge $(v_i, v_j)$ with probability $W(r_i, r_j)$.
\end{enumerate}

Note that $G_n$ is simple, i.e., it does not have self-loop nor multiple edges. 
Given a step-graphon $W$, let 
\begin{equation}\label{eq:defPn}
P_n(W):= \mathbf{P}(G_n \sim W \mbox{ has a cycle cover}).
\end{equation} 
We have the following definition for $H$-property: 

\begin{definition}[$H$-property]\label{def:Hproperty} 
A graphon $W$ has the {\bf $H$-property} if
$$
\lim_{n \to \infty} P_n(W) = 1.
$$
\end{definition}

In this paper, we focus on the class of step-graphons: 

\begin{definition}[Step-graphon and its partition]\label{def:stepgraphon}
A graphon $W$ is a {\bf step-graphon} if there exists a sequence $0 = \sigma_0 < \sigma_1 < \cdots < \sigma_q = 1$ such that $W$ is constant on each rectangle $[\sigma_i, \sigma_{i+1}) \times [\sigma_j, \sigma_{j+1})$ for all $0 \leq i,j \leq q-1$. We call 
$\sigma = (\sigma_0, \sigma_1, \ldots, \sigma_q)$ a {\bf partition} for $W$.
\end{definition}

We now introduce the key objects that decide the zero-one law for the $H$-property: 

\begin{definition}
[Concentration vector]\label{def:concentrationv} Let $W$ be a step-graphon with partition $\sigma = (\sigma_0, \ldots, \sigma_q)$. The associated {\bf concentration vector} $x^* = (x^*_1, \ldots, x^*_q)$ is such that
$x^*_i := \sigma_i - \sigma_{i-1}$, for all $i = 1, \ldots, q$.
\end{definition}

Note that $x^*$ is a probability vector with all entries positive.  

We next have the following definition:

\begin{definition}
[Skeleton graph]\label{def:SkeletonGraph} To a step-graphon $W$ with a partition $\sigma = (\sigma_0, \ldots, \sigma_q)$, we assign the undirected graph $S$ on $q$ nodes, with $V(S) = \{u_1, \ldots, u_q\}$ and edge set $E(S)$ defined as follows: there is an edge between $u_i$ and $u_j$ if and only if $W$ is nonzero over $[\sigma_{i-1}, \sigma_i) \times [\sigma_{j-1}, \sigma_j)$. We call $S$ the {\bf skeleton graph} of $W$ for the partition $\sigma$. 
\end{definition}

Note that the concentration vector $x^*$ and the skeleton graph $S$ together completely determine the support of the graphon $W$, and vice versa. 

For each edge $f_j = (u_{i_1}, u_{i_2})$ of $S$, we let $z_j$ be the corresponding incidence vector: 
$$z_j := \frac{1}{2} (e_{i_1} + e_{i_2})  \in \R^q.$$
Let $Z := [z_1,\ldots, z_k]$ be the incidence matrix. We have

\begin{definition}[Edge cone]\label{def:edgecone} 
The {\bf edge cone} $\cX$ of $S$ is the convex cone generated by the  vectors $z_1,\ldots, z_k$, i.e.,
$$
    \cX := \left \{\sum_{j = 1}^k c_j z_j \mid c_j \geq 0 \mbox{ for all } j = 1,\ldots, k\right \}.
$$

\end{definition}

\begin{figure}[t]
\centering

\begin{minipage}[t]{0.46\columnwidth}
\vspace{0pt}
    \centering
    
    \begin{tikzpicture}[scale=.50]
    \fill[bottom color=white,top color=black] (-1.8,0) rectangle (-2.6,4) node [left] {\small$1$};
\node [left] at (-2.6,0) {\small$0$};

        \filldraw [fill=gray!70!black!40, draw=gray!70!black!40] (1.2,0) rectangle (2.4,1.6);
        \filldraw [fill=gray!70!black!40, draw=gray!70!black!40] (2.4,1.6) rectangle (4,2.8);
        \filldraw [fill=black, draw=black] (2.4,0) rectangle (4,1.6);
        \filldraw [fill=gray!60!black!70, draw=gray!60!black!70] (0,1.6) rectangle (1.2,2.8);
        \filldraw [fill=gray!60!black!70, draw=gray!60!black!70] (1.2,2.8) rectangle (2.4,4);
        \filldraw [fill=black, draw=black] (0,2.8) rectangle (1.2,4);

        \draw [draw=black,very thick] (0,0) rectangle (4,4);

        \node [above] at (0,4) {$0$};
        \node [above] at (1.2,4) {$.3$};
        \node [above] at (2.4,4) {$.6$};
        \node [above] at (4,4) {$1$};
    \end{tikzpicture}
    
    \vspace{3mm}
    
    \begin{tikzpicture}[scale=0.85]
        \tikzset{every loop/.style={}}
        \node at (-0.7,0) {$S$};
        \node [circle,fill=black,inner sep=1.2pt,label=below:{$u_1$}] (1) at (0, 0) {};
        \node [circle,fill=black,inner sep=1.2pt,label=below:{$u_2$}] (2) at (.8, 0) {};
        \node [circle,fill=black,inner sep=1.2pt,label=below:{$u_3$}] (3) at (1.6, 0) {};
        
        \path[draw,thick,shorten >=2pt,shorten <=2pt]
            (1) edge[loop left] (1)
            (3) edge[loop right] (3)
            (1) edge (2)
            (2) edge (3);
    \end{tikzpicture}
\end{minipage}
\begin{minipage}[t]{0.50\columnwidth}
\vspace{0pt}
    \centering
    
    \begin{tikzpicture}[scale=1.15]
        \tikzset{every loop/.style={}}
        \node [circle,fill=black,inner sep=0.8pt] (0) at (0,0) {};
        \draw[->] (0,0) -- (1.55,0) node [above] {\footnotesize $x_1$};
        \draw[->] (0,0) -- (0,1.55) node [right] {\footnotesize $x_2$};
        \draw[->] (0,0) -- (-0.9,-0.9) node [left] {\footnotesize $x_3$};

        \node (1) at (1.2, 0) {};
        \node (2) at (0, 1.2) {};
        \node (3) at (-0.66, -0.66) {};
        \node (12) at (0.6, 0.6) {};
        \node (23) at (-0.33, 0.27) {};
        
        \filldraw[draw=black, fill=white]
            (1.center) -- (2.center) -- (3.center) -- cycle;

        \filldraw[draw=black, fill=blue!20]
            (1.center) -- (12.center) -- (23.center) -- (3.center) -- cycle;

        \node (xsl) at (1.0,1.0) {\small \textcolor{blue}{$\cX(S) \cap \Delta^2$}};
        \node (xst) at (0.7, 0.1) {};

        \path[draw,shorten >=3pt,shorten <=3pt]
            (xsl) edge[-latex] (xst);

        \node [circle,fill=red,inner sep=1.0pt,
        label=below:{\color{red} $x^*$}] at (0.096,0.096) {};
    \end{tikzpicture}
\end{minipage}

\caption{{\it Left:} A step-graphon $W$ and its skeleton graph $S$. {\it Right:} The associated edge cone $\cX(S)$ and concentration vector $x^*$ plotted in the standard $2$-simplex.}
\label{fig:graphon-skeleton-simplex}
\end{figure}
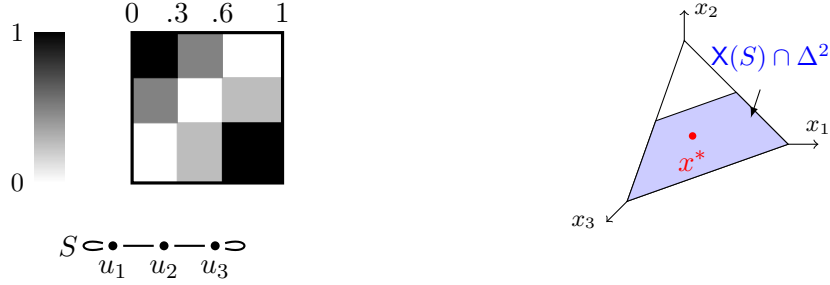

Fig.~\ref{fig:graphon-skeleton-simplex} provides a geometric illustration of the three key objects associated with graphon $W$ introduced above: the skeleton graph $S$, the edge cone $\cX(S)$ embedded in the standard simplx, together with the concentration vector $x^*$.

The following result~\cite{ohsugi1998normal} relates the dimension of $\cX$ to the existence of an odd cycle in $S$. A cycle is said to be \emph{odd} if its length is odd (a self-loop is an odd cycle of length $1$):

\begin{lemma}\label{lem:fullrankness}
Suppose that $S$ is connected; then, 
\[
\dim \, \cX = 
\begin{cases} 
q  & \text{if } S \text{ has an odd cycle,} \\
q - 1 & \text{otherwise.}
\end{cases}
\]
\end{lemma}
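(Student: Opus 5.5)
Since $\cX$ is the cone generated by $z_1,\ldots,z_k$, its dimension equals the rank of the incidence matrix $Z = [z_1,\ldots,z_k]$. Rescaling by $2$ changes nothing, so I will work with the matrix $2Z$. For an edge $(u_i,u_j)$ with $i\neq j$, its column is $e_i+e_j$. For a self-loop at $u_i$, the column is $2e_i$. The plan is to compute the rank through the orthogonal complement: I will determine the space of vectors $y\in\R^q$ with $y^\top Z = 0$ and show that it has dimension $1$ if $S$ is connected and bipartite, and dimension $0$ if $S$ is connected and contains an odd cycle. Then $\dim \cX = q - \dim \ker Z^\top$ gives the claim.

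The condition $y^\top Z=0$ reads $y_i + y_j = 0$ for every edge $(u_i,u_j)$, and $y_i = 0$ for every self-loop at $u_i$. Fix a root $u_1$ and set $y_1 = t$. Because $S$ is connected, every node $u_i$ is joined to $u_1$ by a walk. Propagating the constraint along that walk forces $y_i = (-1)^{\ell} t$, where $\ell$ is the walk length. So $\ker Z^\top$ has dimension at most $1$, which gives $\dim\cX \ge q-1$ in all cases.

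If $S$ is bipartite (in particular, it has no self-loops), all walks between two given nodes have the same parity. Taking a bipartition $V(S)=A\cup B$, the vector $y$ with $y_i = 1$ on $A$ and $y_i=-1$ on $B$ satisfies every constraint and is nonzero. Hence the kernel is exactly one-dimensional and $\dim \cX = q-1$.

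If $S$ contains an odd cycle $C$ of length $\ell$, walking once around $C$ from a node $u_i$ on it forces $y_i = (-1)^\ell y_i = -y_i$, so $y_i = 0$. For a self-loop this is simply the constraint $y_i=0$. Connectivity then propagates $y\equiv 0$ to all nodes, so the kernel is trivial and $\dim\cX = q$.

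The only point needing care is the equivalence, for connected graphs, between ``bipartite'' and ``no odd cycle.'' Here self-loops count as odd cycles of length $1$, which matches the constraint $2y_i=0$. This is standard: a consistent parity labeling by distance from the root exists exactly when no closed walk is odd, and any odd closed walk contains an odd cycle. I expect no substantial obstacle beyond stating this cleanly.
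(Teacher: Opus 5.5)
Your proof is correct, and it takes a different route from the paper, which gives no proof of this lemma. The paper simply imports the result from Ohsugi and Hibi, whose work concerns edge polytopes (convex hulls of edge vectors) rather than the cone. Using that citation therefore tacitly requires a translation. Since every $z_j$ satisfies $\mathbf{1}^\top z_j = 1$, the cone $\cX$ is the cone over the polytope $\cX\cap L$, so (whenever $S$ has at least one edge) $\dim\cX = \dim(\cX\cap L)+1$.

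You bypass this by computing $\operatorname{rank} Z$ directly through $\ker Z^\top$, the standard argument for the rank of an unsigned incidence matrix. Your route is self-contained and treats self-loops explicitly. A loop at $u_i$ contributes the generator $e_i$ and hence the constraint $y_i=0$. This is exactly why the paper's convention of counting a self-loop as an odd cycle of length $1$ is the right one. With the citation, one would instead have to check that the cited setting handles loops the same way.

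One step you assert without comment deserves a line: $\dim\cX = \operatorname{rank} Z$. It holds because $0\in\cX$, so the affine hull of $\cX$ is its linear span, which equals $\operatorname{span}\{z_1,\ldots,z_k\} = \operatorname{Im} Z$.
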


\subsection{Main result}\label{ssec:mainresult}

Given a step-graphon $W$, let $x^*$ and $S$ be the associated concentration vector and the skeleton graph, respectively. 
We assume in the sequel that $S$ is connected, which is equivalent to the condition that $W$ is not block-diagonal. If $S$  has multiple connected components, then our results state below can be applied to each connected component of $S$. 
We introduce the following three conditions: 
\begin{description}[leftmargin=2em]
\item[\it $A$:] $S$ has an odd cycle. 
\item[\it $B$:] $x^*\in \rint \cX$.  
\item[\it $B'$:] $x^* \in \cX$. 
\end{description}
The main theorem of the paper is stated below: 

\begin{theorem}[Main Theorem]\label{thm:convergencerate}
Let $W$ be a step-graphon and $P_n(W)$ be given in~\eqref{eq:defPn}. 
Then, the following items hold:
\begin{enumerate}
\item If $W$ satisfies Conditions $A$ and $B$, then there exist positive constants $c$ and $\lambda$ such that
\begin{equation}\label{eq:case1}
    1 - P_n(W) \leq c\, e^{-\lambda n}.
\end{equation}

\item If $W$ does not satisfy Condition $B'$, then there exist positive constants $c$ and $\lambda$ such that

\begin{equation}\label{eq:case2}
P_n(W) \leq c \, e^{-\lambda n}. 
\end{equation}

\item If $W$ satisfies Condition $B'$, but not $A$, then there exist a positive constant $c$ such that 
\begin{equation}\label{eq:case3}
P_n(W) \leq \frac{c}{\sqrt{n}}.
\end{equation}

\item If $W$ satisfies Condition $A$ and $B'$, but not $B$, then there exist a $p^*\in (0,1)$ and a positive constant $c$ such that
\begin{equation}\label{eq:case4}
|P_n(W) - p^*| \leq \frac{c}{\sqrt{n}}.  
\end{equation}
\end{enumerate}
\end{theorem}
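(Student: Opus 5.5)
Throughout, I would condition on the empirical type vector. Let $N=(N_1,\ldots,N_q)$ count the nodes whose coordinates fall in each block $[\sigma_{i-1},\sigma_i)$, so $N\sim\mathrm{Multinomial}(n,x^*)$, and set $\hat x=N/n$. The argument then has a deterministic part and a probabilistic part.

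\textbf{Deterministic part.} A graph has a cycle cover exactly when it has a perfect $2$-matching, meaning nonnegative integer edge weights with every node degree equal to $2$, where a self-loop counts $2$ and each edge has multiplicity at most $2$ (two parallel uses of an edge form a $2$-cycle). Whenever the graph is rich enough, a Tutte/Hall-type condition for $2$-matchings reduces existence of a cycle cover to a condition on the types alone. Concretely, there must exist a symmetric nonnegative integer flow $m_{ij}$ on $E(S)$ with $\sum_j m_{ij}=2N_i$, counting loops appropriately, and this forces $N\in\cX$. The parity issue is where Condition $A$ enters. If $S$ is bipartite with parts $U_1,U_2$, then $N\in\cX$ forces $\sum_{U_1}N_i=\sum_{U_2}N_i$, which is a codimension-one lattice event. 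If $S$ has an odd cycle, there is no such constraint.

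The key intermediate statement I would prove is the following. Conditional on $N$ with $\hat x$ at distance at least $\delta$ inside $\rint\cX$ (or, with $A$, in a fixed region), the random graph fails to have a cycle cover with probability at most $e^{-\lambda n}$. To show it, I would take a fractional solution $c$ with $Zc=\hat x$ and all $c_j\ge\delta'$, round it to integer block-pair counts, and realize each block pair by a bipartite or within-block Erd\H{o}s--R\'enyi graph of linear size and constant density. Each such piece contains the required regular subgraphs with probability $1-e^{-\lambda n}$, for example via Hall's condition with a union bound over sets, standard for dense random bipartite graphs.

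\textbf{Probabilistic part, cases 1 and 2.} For item 1, I would combine the estimate above with the Hoeffding/Chernoff bound $\bP(\|\hat x-x^*\|>\delta)\le 2q e^{-2\delta^2 n}$, valid since $x^*\in\rint\cX$ and $\dim\cX=q$ by Lemma~\ref{lem:fullrankness}. For item 2, $x^*\notin\cX$ and $\cX$ is closed, so $\operatorname{dist}(x^*,\cX)>0$. A cycle cover forces $\hat x\in\cX$, so again $P_n\le \bP(\|\hat x-x^*\|>\delta)\le ce^{-\lambda n}$.

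\textbf{Probabilistic part, cases 3 and 4.} For item 3, $S$ is bipartite, so a cycle cover requires the necessary event $\sum_{U_1}N_i=\sum_{U_2}N_i$. Writing this as $\sum_i \epsilon_i N_i=0$ with $\epsilon_i=\pm1$, it is the event that a lattice random walk with bounded non-degenerate steps is exactly zero at time $n$. The local limit theorem, or a direct binomial bound after conditioning, gives probability $O(n^{-1/2})$.

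Item 4 is the main obstacle. Here $x^*$ lies on a proper face of the full-dimensional $\cX$, so $\sqrt n(\hat x-x^*)\to \mathcal N(0,\Sigma)$ with $\Sigma=\diag(x^*)-x^*x^{*\top}$. Near $x^*$ the cone looks locally like its tangent cone $T$, which is polyhedral, so $P_n\approx\bP(\hat x\in\cX)$ converges to $p^*=\bP(\mathcal N(0,\Sigma)\in T)$. This lies in $(0,1)$ because $T$ is a proper cone with nonempty interior and $\Sigma$ is nondegenerate on the hyperplane $\sum x_i=0$. Beyond that approximation, I must show three things.
\begin{enumerate}
\item The conditional failure probability is small even when $\hat x$ is within $O(n^{-1/2})$ of the boundary. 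Types inside $\cX$ only need the faces' edges; the relevant block pairs still have linear size, except the forced-zero flows, which I would handle by using the face's own edges. This is the delicate combinatorial step.
\item Because $\cX$ is a cone and $\cX$ coincides with $x^*+T$ in a neighbourhood of $x^*$, we have $\bP(\hat x\in\cX)=\bP(\sqrt n(\hat x-x^*)\in T)+O(e^{-\lambda n})$.
\item A multivariate Berry--Esseen bound for convex sets (Bentkus) gives $|\bP(\sqrt n(\hat x-x^*)\in T)-p^*|\le c/\sqrt n$. Since the law is a lattice law, I would also account for lattice points on $\partial T$ via a local limit estimate; these carry mass $O(n^{-1/2})$, consistent with the claimed rate.
\end{enumerate}
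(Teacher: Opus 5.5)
Your items 2 and 3 and the upper half of item 4 are sound. For item 3, the bound $\bP(\mathrm{Bin}(n,p)=n/2)=O(n^{-1/2})$ with $p=\sum_{U_1}x^*_i$ is a more elementary substitute for the paper's Berry--Esseen-plus-zero-Gaussian-mass argument.

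The genuine gap is the lower bound $P_n\ge p^*-c/\sqrt n$ in item 4, i.e.\ your point 1. You leave it open, and the mechanism you sketch does not work. Typically $\hat x$ lies within $O(n^{-1/2})$ of $x^*$, hence within $O(n^{-1/2})$ of the facets through $x^*$. But it lies \emph{off} the minimal face $F\ni x^*$, which sits in a hyperplane through $x^*$ and so carries only $O(n^{-1/2})$ mass. Such a type is not in the cone generated by $F$'s generators, so you cannot ``use the face's own edges''. Indeed, for each facet $H_k\supseteq F$ one has $\sum_{z_j\notin F}c_j\,v_k^\top z_j=v_k^\top\hat x$. Hence every representation $\hat x=Zc$ puts positive but small weight on edges of $S$ outside $F$, of order $n^{-1/2}$ and down to $1/n$. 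So there is no dichotomy between linear-size pieces and forced-zero flows. Your construction needs block-pair pieces with $m=O(\sqrt n)$ or even $O(1)$ nodes per side. When $p_{ij}<1$, a pre-allocated piece of that size lacks a perfect matching with probability at least $(1-p_{ij})^m$, which is a constant for $m=O(1)$. The exponential conditional bound from your item 1 therefore does not transfer to the regime that carries the mass $p^*$.

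The paper avoids flow decompositions altogether. By Lemma~\ref{lem:hamiltonicityofKx}, $K_y$ has a cycle cover for \emph{every} integer $y\in\cX$ with $y_i\ge 3$, boundary types included. The Blow-up Lemma transfers this to every $(\epsilon,\delta)$-super-regular $S$-partite $G_n$ with $y(G_n)=y$. Super-regularity fails with probability at most $ce^{-\lambda n}$ given $x(G_n)\in\mathcal B_r(x^*)$, so sufficiency holds uniformly on $\mathcal B_r(x^*)\cap\cX$. Bentkus's bound on the convex set $T_n(\mathcal B_r(x^*)\cap\cX)$, which contains $\Omega^*\cap\mathcal B_{r\sqrt n}(x^*)$, then gives the lower bound.

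To keep your elementary route you need a genuinely new ingredient, together with exact integer rounding. One option is to choose the few off-face edges adaptively first, then pre-allocate the remaining nodes to the face pieces. This works because the off-face edges lie on block pairs of $S$ distinct from those of $F$, so they are independent of the face pieces. Another option is a piece-size-dependent failure bound summed against local-limit estimates.

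A smaller omission of the same kind affects item 1. If $S$ is non-bipartite but loopless and $n$ is odd, matchings and even cycles on block pairs cannot cover all $n$ nodes. You must insert an odd cycle of $G_n$ running along an odd cycle of $S$; in the paper this parity issue is absorbed by Lemma~\ref{lem:hamiltonicityofKx}.
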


\begin{remark}\normalfont
Note that the zero-one law for the $H$-property has been established  in~\cite{belabbas2021h,belabbas2023geometric,gao2025h}. The contribution of the above theorem is not about characterizing when $P_n(W)$ converges to $0$ or $1$ (or some value $p^*\in (0,1)$), but rather characterizing the convergence rate. 
The value $p^*$ in item~4 of the theorem admits an explicit expression, which we will introduce in Subsection~\ref{ssec:preliminaries}. 
\hfill{\qed}
\end{remark}

The remainder of the paper is organized as follows. In Section~\ref{sec:proof}, we prove the main theorem. Specifically, in Subsection~\ref{ssec:preliminaries}, we present the preliminary results that will be used throughout the proofs, including an explicit characterization of the limiting value $p^*$ in the residual case, necessary and sufficient conditions for the existence of a cycle cover for $S$-partite graphs (see Definition~\ref{def:Spartitegraph}), the use of 
the Blow-up Lemma, and relevant properties of multi-nomial random variables. 
Next, in Subsection~\ref{ssec:expupperbound}, we establish the exponential upper bounds stated in items~1 and~2 of the main theorem. 
Then, in Subsection~\ref{ssec:rootnupperbound}, we establish the root-$n$ convergence for items~3 and~4. 
Finally, in Section~\ref{sec:numericalstudy}, we provide numerical validation for the four items of the main theorem. The paper ends with conclusions. 

\section{Proof of the Main Theorem}\label{sec:proof}

\subsection{Preliminaries}\label{ssec:preliminaries}

\subsubsection{On the value $p^*$ for the residual case}

In this subsection, we provide an expression for the value $p^*$ in item~4 of Theorem~\ref{thm:convergencerate}. 
To this end, we first recall the \emph{facet-defining hyperplane} $H_k$ of the edge cone $\cX$. $H_k$ is a co-dimensional one subspace of $\mathbb{R}^q$ that satisfied the following two conditions:
\begin{enumerate}
    \item There exist $(q- 1)$ linearly independent vectors $z_{k_1}, \dots, z_{k_{q-1}}$ out of the columns of the matrix $Z$ such that $H_k$ is spanned by these vectors.
    \item The subspace $H_k$ is a supporting hyperplane for $\cX$, i.e., there exists a unique vector $v_k \in \mathbb{R}^q$ of unit length perpendicular to $H_k$, such that $v_k^\top x\geq0,$ for all $x \in \cX.$
\end{enumerate}
We denote by $\mathcal{H}$ the collection of all facet-defining hyperplanes of $\cX$. Since the concentration vector $x^*$ lies on the boundary $\partial\cX$, there exists at least one hyperplane $H_k \in \mathcal{H}$ such that $x^* \in H_k$. We define $\mathcal{H}(x^*)$ as the set of all such hyperplanes, that is, 
\[
\mathcal{H}(x^*) = \{H_k \in \mathcal{H} \mid v_k^\top x^* = 0\}.
\]
Let $L$ be the affine hyperplane that contains the standard simplex $\Delta^{q-1}$:
\begin{equation}\label{eq:L}
    L := \left\{ x \in \mathbb{R}^q \,\middle|\, \mathbf{1}^\top x = 1 \right\}.
\end{equation}

The associated affine convex cone is then given by
\[
\Omega^* := \{ \omega \in L \mid v_k^\top \omega \ge 0 \text{ for all } H_k \in \mathcal{H}(x^*) \}.
\]
We have the following result from~\cite{gao2025h}:

\begin{lemma}\label{lem:residualcase}
    If $x^* \in \partial \cX$ and $S$ has an odd cycle, then 
    \[
    p^* = \lim_{n\to \infty}P_n(W) = \bP(\omega^* \in  \Omega^*) .
    \] 
\end{lemma}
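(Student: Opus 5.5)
The plan is to reduce the existence of a cycle cover in $G_n$ to a geometric condition on the empirical type vector, and then pass to the limit with the multivariate central limit theorem.

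Let $n_i$ be the number of nodes whose coordinate lies in $[\sigma_{i-1},\sigma_i)$, and set $x(n) := (n_1,\dots,n_q)/n \in \Delta^{q-1}$. Then $(n_1,\ldots,n_q)$ is multinomial with parameters $(n, x^*)$. Conditioned on these counts, $G_n$ is an $S$-partite random graph: inside each pair of blocks joined by an edge of $S$, edges appear independently with a fixed positive probability, and no edges appear elsewhere. The multivariate CLT gives
\[
\sqrt{n}\,(x(n)-x^*) \Rightarrow g \sim \mathcal N\bigl(0,\ \diag(x^*) - x^* x^{*\top}\bigr),
\]
where $g$ is supported on the tangent space $\{\mathbf 1^\top y = 0\}$ of $L$. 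I would take $\omega^* := x^* + g$, which is the interpretation I assume for the $\omega^*$ in the statement. Because every $H_k\in\mathcal H(x^*)$ satisfies $v_k^\top x^*=0$, the set $\Omega^*$ is a cone in $L$ with apex $x^*$. Hence $\omega^*\in\Omega^*$ if and only if $x^* + g/\sqrt n \in \Omega^*$ for every $n$.

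\emph{Necessity.} I first show that a cycle cover of $G_n$ forces $x(n)\in\cX$. Orient every cycle of the cover. Each node then has exactly one outgoing edge, each edge of length $\geq 2$ contributes to two blocks, and a $2$-cycle counts as a double edge. Summing the edge incidence vectors of the underlying blocks gives a nonnegative combination of the $z_j$, and this combination equals $(n_1,\dots,n_q)$. Therefore
\[
\{G_n \text{ has a cycle cover}\}\subseteq\{x(n)\in\cX\}.
\]
Near $x^*$, the cone $\cX$ coincides with the local cone $\{x : v_k^\top x\ge 0 \ \text{for all } H_k\in\mathcal H(x^*)\}$. All other facet inequalities hold strictly at $x^*$, and $x(n)\to x^*$ exponentially fast outside any fixed neighbourhood by a Chernoff bound. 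So
\[
\limsup_n P_n(W) \le \lim_n \bP\bigl(x^*+g_n/\sqrt n\in\Omega^*\bigr) = \bP(\omega^*\in\Omega^*),
\]
where $g_n := \sqrt n\,(x(n)-x^*)$. The last equality uses the Portmanteau theorem: $\partial\Omega^*$ is a finite union of hyperplane pieces, and these are null sets for $g$. This holds because $g$ is nondegenerate on the $(q-1)$-dimensional tangent space, and no $v_k$ with $H_k\in\mathcal H(x^*)$ is parallel to $\mathbf 1$, since $v_k^\top x^*=0$ while $\mathbf 1^\top x^*=1$.

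\emph{Sufficiency.} This is the main obstacle. I would show that for every fixed $\delta>0$, with high probability a cycle cover exists on the event $\{v_k^\top x(n)\ge \delta/\sqrt n \text{ for all } H_k\in\mathcal H(x^*)\}$. By the CLT, the probability of this event tends to $\bP(\min_k v_k^\top g\ge\delta)$, and letting $\delta\to0$ recovers $\bP(\omega^*\in\Omega^*)$. On this event, Condition $A$ together with Lemma~\ref{lem:fullrankness} makes $\cX$ full-dimensional. I would then write $(n_1,\dots,n_q)=\sum_j c_j z_j$, using a Carath\'eodory/integer-rounding argument, so that every edge of $S$ that is needed receives weight at least of order $\sqrt n$. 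Parity issues are fixed by a self-loop or an odd cycle of $S$; this is where Condition $A$ enters.

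This decomposition prescribes how many edges of each type the cycle cover uses. I would realize it in two steps:
\begin{enumerate}
\item Reserve small linear-size structures, such as short paths and odd cycles that absorb the $O(\sqrt n)$ imbalance.
\item Cover the remaining balanced bipartite pieces by perfect matchings or Hamilton cycles, applying the Blow-up Lemma (or Hall's condition, since each bipartite block is a dense random graph with all degrees linear whp).
\end{enumerate}
The delicate point is that the slack is only of order $\sqrt n$ and not linear. I would therefore first solve the degree-balancing problem exactly, using integrality of the decomposition, and then invoke the Blow-up Lemma only on pieces that are exactly balanced. This gives a matching $\liminf_n P_n(W)\ge \bP(\omega^*\in\Omega^*)$, which completes the proof.
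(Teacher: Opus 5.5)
\textbf{Upper bound.} This half is correct and is the paper's necessity direction. A cycle cover forces $x(G_n)\in\cX$ (Lemma~\ref{lem:edgecone}). Moreover $\cX\cap L\subseteq\Omega^*$, so your localization step is not even needed. The CLT with Portmanteau, using that $\partial\Omega^*$ is Gaussian-null in $L$, then gives $\limsup_n P_n(W)\le\bP(\omega^*\in\Omega^*)$.

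\textbf{Lower bound: the gap.} You flag this direction as the main obstacle and then only sketch it. Integers $m_j\ge 0$ with $\sum_j m_j z_j=y(G_n)$ only prescribe how many edges of each type a 2-factor should use. They do not produce a 2-factor, not even in the complete $S$-partite graph. You still have to assign to each vertex the two edge types it will use. Then, in each pair of blocks, you must realize a subgraph with prescribed degrees in $\{1,2\}$, and these degrees generally differ between the two sides. For example, vertices of block $a$ may send both edges to block $b$ while vertices of $b$ send one edge to $a$ and one to $c$. That piece is a union of cherries, neither a perfect matching nor a Hamilton cycle of a balanced piece. The structures that are supposed to absorb the $O(\sqrt n)$ imbalance are also never specified.

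There is also no parity obstruction. The polyhedron $\{h\ge 0:\sum_j h_j\,2z_j=y\}$ has half-integral vertices, so every integral $y\in\cX$ has an integral decomposition. Condition~$A$ enters through $\dim\cX=q$, not through parity.

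\textbf{Missing ingredient.} What you need is the deterministic Lemma~\ref{lem:hamiltonicityofKx}: $K_y$ has a cycle cover for every integral $y\in\cX$ with all $y_i\ge 3$. With it, your worry about slack disappears. The Blow-up Lemma (Lemma~\ref{lem:blowup}) allows arbitrary class sizes $y\in\N^q$ and is applied once to the whole $S$-partite graph, so you need no balanced pieces and no $\sqrt n$ margin.

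Given $x(G_n)\in\mathcal B_r(x^*)$, the graph $G_n$ is $(\epsilon,\delta)$-super-regular with probability $1-O(e^{-\lambda n})$ (Lemma~\ref{lem:expconvregular}). Self-loops are removed by the diagonal-splitting trick in the remark after Proposition~\ref{prop:case1_2upperbound}. Together these give the sandwich~\eqref{eq:rootn_sandwich}, hence $P_n(W)=\bP(x(G_n)\in\cX)+O(e^{-\lambda n})$. Your own CLT/Portmanteau step then yields the matching lower bound directly, with no $\delta\to 0$ limit.

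The paper imports this lemma from \cite{gao2025h}, but this is exactly the route its proof of Proposition~\ref{prop:rootnupperbounds} takes.
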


\subsubsection{On the use of Blow-up Lemma and almost sure $(\epsilon,\delta)$-regularity}
We recall the Blow-up Lemma from the extremal graph theory  and describe its use in this paper. 

We start with the following definition:

\begin{definition}[$S$-partite graph]\label{def:Spartitegraph}
Let $S$ be an undirected graph on $q$ nodes, possibly with self-loops.
A simple, undirected graph $G$ is {\bf $S$-partite} if there exists a graph homomorphism
$\pi :G \to S$. 
Further, $G$ is a \emph{complete $S$-partite graph} if
\[
(v_i, v_j) \in E(G)
\quad \Longleftrightarrow \quad
(\pi(v_i), \pi(v_j)) \in E(S).
\]
\end{definition}

Given a vector $y\in \N^q$, we let $K_y$ be the complete $S$-partite graph such that $y_i:= |\pi^{-1} (u_i)|$ for all $u_i\in V(S)$, i.e., $y_i$ is the size of the $i$-th community. We first have the following result~\cite{jiang2026largest}:  

\begin{lemma}\label{lem:hamiltonicityofKx}
If $y\in \cX$ is integer-valued and if $y_i \geq 3$ for all $i = 1,\ldots, q$, then $K_y$ has a cycle cover. \end{lemma}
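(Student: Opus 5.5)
The plan is to turn the cone condition $y\in\cX$ into an explicit fractional cycle cover of the skeleton $S$, round it to an integral one, and then realize that integral structure inside $K_y$ as a union of node-disjoint cycles. Since $y\in\cX$, we can write $y=\sum_j c_j z_j$ with $c_j\ge 0$. We read $c_j$ as a weight on edge $f_j$ of $S$. Each node $u_i$ then receives total weight $\sum_{j\ni i} c_j/2\cdot(\text{multiplicity})=y_i$. Here a self-loop at $u_i$ contributes $c_j$ to $y_i$, since $z_j=e_i$.

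Equivalently, set $b_j:=c_j$ for non-loop edges and $b_j:=c_j$ for loops. The degree condition then reads $\sum_{\text{non-loop } j\ni i} b_j + 2\cdot\tfrac{1}{2}\sum_{\text{loops at }i} b_j\cdot\ldots$. Rather than fuss with this, I will pass to a $2$-factor formulation. A cycle cover of $K_y$ is a spanning $2$-regular subgraph, allowing $2$-cycles only if we count a doubled edge; since $G$ is simple, cycles must have length $\ge 3$. The first step is therefore to find nonnegative integers $m_{ab}$, one for each edge $(u_a,u_b)$ of $S$, counting the edges of the desired $2$-factor between communities. Writing $m_{aa}$ for the number of edges inside community $a$ (allowed only if $S$ has a loop at $u_a$), we need $2m_{aa}+\sum_{b\ne a} m_{ab}=2y_a$. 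The condition $y=\sum c_j z_j$ gives exactly a real solution of the form $m_{ab}=c_{(a,b)}$ and $m_{aa}=c_{(a,a)}/2$.

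The next step is integrality. The system is a $b$-matching/flow-type system with right-hand side $2y$, which is even and integral. Doubling, it has a rational solution, and I will argue that a solution with integer $m$ exists by a standard rounding along cycles of the support. Non-integral entries lying on an even cycle of the support can be shifted alternately $\pm t$. Non-integral entries on odd cycles, or on paths ending at loops, can be shifted while keeping the parity of $2y$, using that loop variables carry coefficient $2$. Iterating this removes all fractional entries. I expect this parity/rounding step to be the main obstacle. If it fails directly, I would instead use the known fact that the edge cone intersected with the lattice generated by the $z_j$ is normal, which is the Ohsugi--Hibi normality underlying Lemma~\ref{lem:fullrankness}. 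Since $2y\in 2\mathbb Z^q$, this lets us write $2y$ as a nonnegative integer combination of the $2z_j=e_a+e_b$, which is exactly an integer degree-$2y$ multigraph on $S$.

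Finally, we realize the integral degree sequence in $K_y$. The integer data define a multigraph $M$ on $V(S)$, with loops, in which $u_a$ has degree $2y_a$. We split each $u_a$ into its $y_a$ vertices, each of degree $2$, distributing the edge-ends. This yields a $2$-regular multigraph $F$ on $V(K_y)$ all of whose edges respect $\pi$, i.e.\ edges of $K_y$ except possibly for loops or repeated edges. Among all such distributions we choose one minimizing the number of loops plus double edges. If a defect remains, it lies in a short cycle $C$ of length $1$ or $2$ on vertices in communities $a$ and $b$. Since $y_a\ge 3$, there is another cycle through some other vertex $w$ of community $a$ (or the same community pattern). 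We can then swap endpoints: replace edges $vx$, $wz$ by $vz$, $wx$ with $v,w$ in the same community. This merges the two cycles into one longer cycle using only edges of $K_y$ and strictly reduces defects, and the hypothesis $y_i\ge 3$ guarantees enough room to do it. The result is a genuine cycle cover of $K_y$, completing the proof.
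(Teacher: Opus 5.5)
\textbf{The gap is the integrality step, which you flag yourself.} The rest of your argument works. The switch $vx,wz\mapsto vz,wx$ with $\pi(v)=\pi(w)$ preserves all edge types and merges the defective component with another one. It strictly lowers the number of loops plus double edges in every case: two loops become one double edge, and every other combination yields a cycle of length at least $3$. The hypothesis $y_a\ge 3$ is needed exactly when the double edge lies inside one community. There is also a minor slip: since a loop has $z_j=e_a$, the real solution is $m_{aa}=c_{aa}$, not $c_{aa}/2$.

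What is missing is a proof that there exist nonnegative integers $m$ with $\sum_{b\ne a}m_{ab}+2m_{aa}=2y_a$. Your rounding sketch does not give this. An alternating $\pm t$ shift is impossible on a lone odd cycle, and the fractional entries can form exactly such isolated odd cycles. For example, let $S$ be two triangles $u_1u_2u_3$ and $u_4u_5u_6$ joined by the edge $u_1u_4$, and take $y=(3,\dots,3)$ with $m_{14}=1$, $m_{12}=m_{13}=m_{45}=m_{46}=5/2$, $m_{23}=m_{56}=7/2$. The fractional support is two disjoint triangles. You can only round by also moving the integral entry $m_{14}$, and once integral entries move, ``iterating removes all fractional entries'' is no longer automatic.

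Your fallback is false as stated. Ohsugi and Hibi proved that the edge semigroup is normal \emph{if and only if} $S$ satisfies the odd cycle condition. For two vertex-disjoint triangles joined by a path of length two, $e_1+\cdots+e_6$ lies in the cone and in the lattice but is not a sum of vectors $e_a+e_b$ over edges of $S$. Moreover, Lemma~\ref{lem:fullrankness} is only a dimension statement, not a normality result.

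What you actually need is only the case of an even right-hand side, and that has a short proof. Let $A$ have columns $2z_j$. The polyhedron $\{m\ge 0: Am=2y\}$ is nonempty (it contains $c$) and pointed, so it has a vertex $m^\circ$. Linear independence of the columns in its support forces each component of the support to be a tree or to contain exactly one cycle, which is odd (a loop counts as an odd cycle). Peeling leaves assigns each leaf edge the residual demand at its leaf; this is even, and all residual demands stay even. On the remaining odd cycle with even demands $N_1,\dots,N_\ell$, the unique solution is one half of an alternating sum of the $N_i$, hence an integer, and a loop receives $N/2$. So $m^\circ$ is integral, and with this in place the rest of your argument goes through. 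The paper itself gives no proof of this lemma; it quotes it from the literature.
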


Conversely, we have

\begin{lemma}\label{lem:edgecone}
    If an $S$-partite graph $G$ has a cycle cover, then $x(G_n)\in \cX$. 
\end{lemma}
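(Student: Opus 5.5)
We need to prove Lemma~\ref{lem:edgecone}: if an $S$-partite graph $G$ (with homomorphism $\pi:G\to S$) has a cycle cover, then the normalized community vector $x(G)$, with entries $x_i(G)=|\pi^{-1}(u_i)|/|V(G)|$, lies in $\cX$. Since $\cX$ is a cone, it is enough to show that the unnormalized vector $y=(|\pi^{-1}(u_1)|,\ldots,|\pi^{-1}(u_q)|)$ is a nonnegative combination of the incidence vectors $z_1,\ldots,z_k$.

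\emph{Step 1 (counting edges of the cover).} Let $C$ be a node-wise disjoint cycle cover of $G$. Each cycle in $C$ has length at least $3$ because $G$ is simple, though in the paper's broader convention a cycle could be a single edge traversed twice; the argument below covers that case as well. Every node $v$ of $G$ has degree exactly $2$ in $C$. Summing degrees per community gives $2y_i=\sum_{v\in\pi^{-1}(u_i)}\deg_C(v)$. Each edge $e=(v,w)\in E(C)$ contributes $1$ to the degree sums of $\pi(v)$ and of $\pi(w)$. If $\pi(v)=\pi(w)=u_i$, it contributes $2$ to $2y_i$.

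\emph{Step 2 (mapping to $S$).} Since $\pi$ is a homomorphism, every $e=(v,w)\in E(C)$ maps to an edge $f_j=(\pi(v),\pi(w))\in E(S)$; if $\pi(v)=\pi(w)$ this is a self-loop, which must then lie in $S$. Let $m_j\ge 0$ be the number of edges of $C$ mapped to $f_j$. For $f_j=(u_{i_1},u_{i_2})$ with $i_1\ne i_2$, these edges contribute $m_j$ to each of $2y_{i_1}$ and $2y_{i_2}$, which is $2m_j z_j$. For a self-loop $f_j=(u_i,u_i)$ we have $z_j=e_i$, and the edges contribute $2m_j$ to $2y_i$, again $2m_j z_j$. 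Summing over all edges gives $2y=\sum_j 2m_j z_j$, that is,
\[
y=\sum_{j=1}^k m_j z_j,\qquad m_j\in\mathbb{N}.
\]

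\emph{Step 3.} Hence $y\in\cX$, and dividing by $|V(G)|>0$ keeps us in the cone, so $x(G)\in\cX$. The only delicate point is the self-loop bookkeeping in Step 2, namely that the factor $\tfrac12$ in $z_j$ coincides with $e_i$ for a loop. Once that is checked the identity is exact, and there is no real obstacle.
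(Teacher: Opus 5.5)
Your degree-counting argument is correct. Each edge of the cover projects under $\pi$ to some $f_j\in E(S)$ and contributes exactly $2z_j$ to $2y$, with $z_j=e_i$ for a self-loop. Hence $y=\sum_j m_j z_j$ with $m_j\in\N$, and $x(G)=y/|V(G)|\in\cX$ because $\cX$ is a cone; you were right to read the statement's $x(G_n)$ as $x(G)$.

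The paper states this lemma without a proof of its own, so there is nothing to compare against, but your argument is the standard one. The one point to state explicitly is that, under the 2-cycle convention, an edge used as a 2-cycle is counted twice in $m_j$. That way every node still has degree $2$ in the cover viewed as a multigraph.
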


Next we recall the Blow-up Lemma~\cite{komlos1997blow}, which states that if a subgraph $G$ of $K_y$, with $V(G) = V(K_y)$, satisfies some regularity condition, then any graph of bounded degree, which is embeddable into $K_y$, is also embeddable into $G$. A precise description of the Blow-up Lemma will be given later in this subsection.

For two disjoint subsets $X$ and $Y$ of $V(G)$,
let $e(X,Y)$ be the number of edges between $X$ and $Y$. We need the following definition:
\begin{definition}[Super-regular pair]
Let $G$ be an undirected graph, and $A,B$ be two disjoint subsets of $V(G)$.
The pair $(A,B)$ is {\bf $(\epsilon,\delta)$-super-regular} if
\begin{equation}
\begin{aligned}
e(X,Y) &> \delta |X||Y|, 
\\
&\text{for any } X \subseteq A,\; Y \subseteq B
\\
&\text{with } |X| > \epsilon |A|,\; |Y| > \epsilon |B|,
\end{aligned}
\end{equation}
and, moreover,
\begin{equation}
\begin{aligned}
e(a,B) &> \delta |B|, && \mbox{for all } a \in A, \\
e(b,A) &> \delta |A|, && \mbox{for all } b \in B.
\end{aligned}
\end{equation}
\end{definition}

We extend the above definition to the $S$-partite graphs.

\begin{definition}[Super-regular $S$-partite graphs]
An $S$-partite graph $G$ is
{\bf $(\epsilon,\delta)$-super-regular} if for any two distinct nodes
$u_i,u_j \in V(S)$, $(\pi^{-1}(u_i),\pi^{-1}(u_j))$ is
 $(\epsilon,\delta)$-super-regular.
\end{definition}

The following result is a direct consequence of 
the Blow-up Lemma, specialized to the case where the subgraph $H$ is a cycle cover (and hence, the degree of $H$ is $2$).  

\begin{lemma}[Blow-up Lemma~\cite{komlos1997blow}]\label{lem:blowup}
Let $S$ be an undirected graph on $q$ nodes, without self-loops.
Given any parameter $\delta > 0$,
there exists an $\epsilon = \epsilon(\delta,q) > 0$
such that the following holds for any $y\in \N^q$:
If $K_y$ has a cycle cover, then any
$(\epsilon,\delta)$-super-regular $S$-partite graph $G$, with
$y(G)=y$, has a cycle cover as well.
\end{lemma}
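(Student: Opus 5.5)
The plan is to obtain Lemma~\ref{lem:blowup} as a direct specialization of the general Blow-up Lemma of Koml\'os, S\'ark\"ozy and Szemer\'edi~\cite{komlos1997blow}. The only work is to match the hypotheses and to check that the embedding it produces is spanning. I recall that lemma in the following form. Fix a graph $R$ on vertex set $\{1,\ldots,r\}$ and parameters $\delta>0$ and $\Delta\in\N$. Then there is $\epsilon=\epsilon(\delta,\Delta,r)>0$ with the following property, for arbitrary disjoint finite sets $V_1,\ldots,V_r$ of arbitrary sizes. Let $R(V_1,\ldots,V_r)$ be the graph obtained by replacing each edge $(i,j)$ of $R$ with the complete bipartite graph between $V_i$ and $V_j$. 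Let $G$ be a graph on $\bigcup_i V_i$ obtained by instead replacing each edge of $R$ with an $(\epsilon,\delta)$-super-regular pair. Then every graph $H$ of maximum degree at most $\Delta$ that embeds into $R(V_1,\ldots,V_r)$ also embeds into $G$. The essential point is that $\epsilon$ does not depend on the part sizes, and this is what allows $\epsilon=\epsilon(\delta,q)$ to serve for all $y\in\N^q$.

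I would apply this with $R=S$, $r=q$ and $\Delta=2$. Given $y\in\N^q$ and an $S$-partite graph $G$ with homomorphism $\pi$ and $y(G)=y$, set $V_i:=\pi^{-1}(u_i)$. Since $S$ has no self-loops and $\pi$ is a homomorphism, each $V_i$ is an independent set in $G$. Moreover, every edge of $G$ joins $V_i$ to $V_j$ only when $(u_i,u_j)\in E(S)$. Hence $G$ is a subgraph of $S(V_1,\ldots,V_r)=K_y$ on the same vertex set.

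For each edge $(u_i,u_j)$ of $S$, the super-regularity assumption on $G$ gives that $(V_i,V_j)$ is $(\epsilon,\delta)$-super-regular. Here I read the definition of a super-regular $S$-partite graph as constraining only pairs that are edges of $S$, since pairs corresponding to non-edges carry no edges of $G$ at all. The regularity and minimum-degree conditions in our definition coincide with those in~\cite{komlos1997blow}, so $G$ is exactly a graph of the form the Blow-up Lemma requires. The parts with $y_i=0$ are simply empty and can be discarded, which only decreases $r$; taking $\epsilon$ to be the minimum over $r\le q$ handles this.

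Now suppose $K_y$ has a cycle cover $C$. Then $C$ is a $2$-regular spanning subgraph of $K_y$, so it has maximum degree $2$ and embeds into $K_y=S(V_1,\ldots,V_q)$ by the identity map. The Blow-up Lemma yields an injective homomorphism $\phi: C\to G$. Because $|V(C)|=\sum_i y_i=|V(G)|$, the map $\phi$ is a bijection onto $V(G)$. Therefore $\phi(C)$ is a node-wise disjoint union of cycles visiting every node of $G$, that is, a cycle cover of $G$.

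I expect no serious obstacle. The only delicate points are bookkeeping: checking that the constant $\epsilon$ in~\cite{komlos1997blow} is uniform in the part sizes (it is, depending only on $\delta,\Delta,r$), and that the version quoted there allows unequal part sizes. If a reference stated only the equal-size version, I would instead invoke the general form of~\cite{komlos1997blow}, which explicitly allows arbitrary $|V_i|$.
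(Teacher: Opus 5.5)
Your proposal is correct and takes the same route as the paper, which obtains the lemma directly from the Koml\'os--S\'ark\"ozy--Szemer\'edi Blow-up Lemma with reduced graph $R=S$, maximum degree $2$, and $H$ a cycle cover of $K_y$. The bookkeeping you add is exactly what the paper leaves implicit: parts are independent because $S$ has no self-loops, super-regularity is needed only on edges of $S$, empty parts can be dropped, $\epsilon$ is uniform in the part sizes, and equal vertex counts make the embedding spanning. Since you use only that $C$ has maximum degree at most $2$, the argument also goes through if $2$-cycles are admitted in a cycle cover.
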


Next, let $p_{ij}$ be the value of $W$ over the rectangle $[\sigma_{i-1},\sigma_i)\times [\sigma_{j-1},\sigma_j)$ and 
\begin{equation}\label{eq:delta}
\delta:= \min \{p_{ij} \mid (u_i,u_j)\in E(S)\}.
\end{equation}
For convenience, we let 
\begin{equation}
    \mathcal{E}_n := \mbox{the event that } G_n\sim W \mbox{ is $(\epsilon,\delta)$-super-regular}.\label{eq:superregular}
\end{equation}

We state the following fact: 

\begin{lemma}\label{lem:expconvregular}
Let $\delta$ be given as in~\eqref{eq:delta}. Then, for any $\epsilon>0$ and for any closed set $\mathcal{B}$ in $\rint \Delta^{q-1}$, there exist positive constants $c$ and $\lambda$ such that
$$
\bP(\neg\mathcal{E}_n \mid x(G_n)\in \mathcal{B}) \\ \leq c \, e^{-\lambda n}. 
$$
\end{lemma}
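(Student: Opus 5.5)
We condition on the community sizes and reduce super-regularity to finitely many pairwise conditions, each of which fails only with exponentially small probability. Let $\pi:G_n\to S$ send $v_i$ to $u_j$ whenever $r_i\in[\sigma_{j-1},\sigma_j)$, and let $x(G_n)=y/n$, where $y_j=|\pi^{-1}(u_j)|$. Conditioned on $y$, the edges between distinct communities $u_i,u_j$ are independent Bernoulli variables with parameter $p_{ij}$. When $(u_i,u_j)\in E(S)$ we have $p_{ij}\ge\delta$. Since $\mathcal{B}$ is closed in $\rint\Delta^{q-1}$, and we may take it compact, there is $\beta>0$ with $x_j\ge\beta$ for all $x\in\mathcal{B}$. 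So on the conditioning event every community has size at least $\beta n$. It therefore suffices to show that, for each fixed $y$ with all $y_j\ge\beta n$, $\bP(\neg\mathcal{E}_n\mid y)\le c\,e^{-\lambda n}$ with constants uniform in $y$; averaging over $y$ then gives the claim. A union bound over the at most $q^2$ pairs $(u_i,u_j)\in E(S)$ reduces this further to a single pair $(A,B)=(\pi^{-1}(u_i),\pi^{-1}(u_j))$. (For non-adjacent pairs, the only $S$-partite structure that matters is along edges of $S$, so the regularity condition is imposed on pairs corresponding to edges of $S$. I would state this interpretation explicitly.)

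A technical point is that $\delta$ equals the minimum edge probability, so the strict inequalities $e(X,Y)>\delta|X||Y|$ fail with probability roughly $1/2$ when $p_{ij}=\delta$. I would handle this by proving the statement with $\delta$ replaced by $\delta'=\delta/2$, which is harmless because Lemma~\ref{lem:blowup} holds for any $\delta>0$. Equivalently, one uses the lemma with a margin. I would flag this adjustment explicitly.

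\emph{Degree condition.} For a fixed $a\in A$, $e(a,B)$ is $\mathrm{Bin}(|B|,p_{ij})$ with mean at least $\delta|B|$. By Hoeffding, $\bP(e(a,B)\le\frac{\delta}{2}|B|)\le e^{-\delta^2|B|/2}\le e^{-\delta^2\beta n/2}$. A union bound over at most $n$ vertices gives $n e^{-\delta^2\beta n/2}$, which is still exponentially small after shrinking $\lambda$.

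\emph{Density condition.} This step is the main obstacle: we need a union bound over exponentially many pairs $(X,Y)$. We need it only for $|X|>\epsilon|A|$ and $|Y|>\epsilon|B|$, so $|X||Y|\ge\epsilon^2\beta^2n^2$. For a fixed pair, $e(X,Y)\sim\mathrm{Bin}(|X||Y|,p_{ij})$, and Hoeffding gives
\[
\bP\bigl(e(X,Y)\le \tfrac{\delta}{2}|X||Y|\bigr)\le \exp\!\left(-\tfrac{\delta^2}{2}|X||Y|\right)\le \exp\!\left(-\tfrac{\delta^2\epsilon^2\beta^2}{2}n^2\right).
\]
The number of pairs is at most $2^{|A|+|B|}\le 4^n$. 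The union bound is therefore at most $4^n e^{-c'n^2}$, which decays even faster than exponentially. Combining the two conditions over all pairs in $E(S)$ yields $\bP(\neg\mathcal{E}_n\mid x(G_n)\in\mathcal{B})\le c\,e^{-\lambda n}$.
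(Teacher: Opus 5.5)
Your proof is correct and is the standard argument (condition on the community sizes, apply Hoeffding, take a union bound) that the paper omits and defers to the random-graph literature; the key step, beating the $2^{n}$ choices of $(X,Y)$ with an $e^{-\Theta(n^2)}$ tail, is handled properly. The adjustments you flag are genuinely necessary, not cosmetic. If $\delta=\min p_{ij}$ is attained on a block with $i\neq j$, the independent degrees $e(a,B)\sim\mathrm{Bin}(|B|,\delta)$, $a\in A$, each exceed $\delta|B|$ with probability roughly $1/2$ or less (and never when $\delta=1$, as in the paper's simulations), so $\bP(\mathcal{E}_n\mid x(G_n)\in\mathcal{B})\to 0$. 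Pairs of non-adjacent nodes of $S$ carry no edges at all, so they can never be super-regular. Finally, $\mathcal{B}$ must be read as closed in $\R^q$ (hence compact), which matches how the paper uses the lemma with closed balls.
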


We omit the proof of the lemma, which can be established by standard arguments in random graph theory. We also refer the reader to~\cite{chen2025hamiltonicity} for arguments of a similar result (specifically, Proposition~8 and its proof in Appendix~C). 

\subsubsection{On multinomial random variable}

Given $G_n\sim W$, we let $y_i(G_n):= |\pi^{-1}(u_i)|$ for all $u_i\in V(S)$ and 
$y(G_n) := (y_1(G_n),\ldots, y_q(G_n))$. 
It is clear from the sampling procedure (step 1) that $y(G_n)$ is a multinomial random variable with $n$ trials, $q$ events, and $x^*_i$ the probability of the $i$-th event. Further, let $$x(G_n):= \frac{1}{n} y(G_n).$$
We call $x(G_n)$ the {\it empirical concentration vector}. By law of large numbers, $x(G_n)$ converges to its mean, i.e., the concentration vector $x^*$ as $n \to \infty$.

As we will see in the following subsections, the convergence rate of $P_n(W)$ is essentially determined by the rate of  $\bP(x(G_n)\in \cX)$. For this reason, we gather in this subsection a few relevant results about the random variable $x(G_n)$.  We start with the following lemma: 

\begin{lemma}\label{lem:upexpcvg}
For any $\delta > 0$, 
$$
\bP(\|x(G_n) - x^*\|_2 \leq \delta) \geq 1 - 2q \exp(-2n\delta^2/q).
$$
\end{lemma}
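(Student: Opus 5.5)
We prove Lemma~\ref{lem:upexpcvg} coordinate-wise with Hoeffding's inequality and then a union bound over the $q$ coordinates. For each $i\in\{1,\ldots,q\}$, write $y_i(G_n) = \sum_{k=1}^n \mathds{1}\{r_k \in [\sigma_{i-1},\sigma_i)\}$, a sum of $n$ i.i.d.\ Bernoulli$(x_i^*)$ variables taking values in $[0,1]$. Hoeffding's inequality then gives, for every $t>0$,
\[
\bP\bigl(|x_i(G_n) - x_i^*| > t\bigr) \le 2\exp(-2nt^2).
\]

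Next, we reduce the Euclidean-norm event to coordinate events. If $|x_i(G_n)-x_i^*| \le \delta/\sqrt{q}$ for every $i$, then
\[
\|x(G_n)-x^*\|_2^2 = \sum_{i=1}^q |x_i(G_n)-x_i^*|^2 \le q\cdot \delta^2/q = \delta^2,
\]
so $\|x(G_n)-x^*\|_2\le\delta$. Contrapositively, the event $\{\|x(G_n)-x^*\|_2>\delta\}$ is contained in $\bigcup_{i=1}^q \{|x_i(G_n)-x_i^*|>\delta/\sqrt q\}$.

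Taking $t=\delta/\sqrt q$ in the Hoeffding bound and applying a union bound over the $q$ coordinates,
\[
\bP\bigl(\|x(G_n)-x^*\|_2>\delta\bigr) \le \sum_{i=1}^q 2\exp(-2n\delta^2/q) = 2q\exp(-2n\delta^2/q).
\]
Passing to the complement yields the claim. Every step is standard; the only point needing care is the choice of threshold $\delta/\sqrt q$, which is exactly what produces the constant $2/q$ in the exponent. The correlations among the coordinates of the multinomial vector are harmless because the union bound does not require independence.
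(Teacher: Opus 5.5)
Your proof is correct and follows the paper's route: the paper only says the lemma follows from Hoeffding's inequality applied coordinate-wise plus a union bound, and omits the details. You supply those details correctly, and the threshold $\delta/\sqrt{q}$ reproduces exactly the stated bound $2q\exp(-2n\delta^2/q)$.
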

The above lemma shows that the empirical concentration vector $x(G_n)$ concentrates around its mean $x^*$ at an exponential rate. This result follows directly from applying Hoeffding’s inequality~\cite{hoeffding1963probability} coordinate-wise and a union bound. Details are omitted.

Now, let $\omega(G_n):= \sqrt{n}(x(G_n) - x^*) + x^*$ and $\omega^*$ be a Gaussian random variable with mean $x^*$ and covariance matrix $\Sigma = \diag(x^*) - x^* x^{*\top}$. It is known~\cite{arenbaev1977asymptotic} that
\begin{equation}\label{eq:omegagntostar}
\omega(G_n) \xrightarrow{d} \omega^*.
\end{equation}

Note that $\operatorname{rank}(\Sigma) = q-1$, so the support of $\omega^*$ is $L$, which is defined in~\eqref{eq:L}. We need the following result: 
 
\begin{lemma}\label{lem:berryessen}
There exists a constant $\gamma > 0$ such that for any convex set $Q$ of $L$,  
$$
|\bP(\omega(G_n)\in Q) - \bP(\omega^*\in Q)| \leq \frac{\gamma}{\sqrt{n}}.
$$
\end{lemma}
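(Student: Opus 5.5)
The plan is to recognize $\omega(G_n)$ as a normalized sum of i.i.d. bounded random vectors and to invoke a multivariate Berry--Esseen theorem for convex sets, for instance Bentkus' bound. That bound states: if $Y_1,\ldots,Y_n$ are i.i.d. in $\R^d$ with mean zero and identity covariance, then
\[
\sup_{C\text{ convex}}\Big|\bP\Big(n^{-1/2}\textstyle\sum_{k=1}^n Y_k\in C\Big)-\bP(Z\in C)\Big|\le \frac{c\,d^{1/4}\,\mathbb{E}\|Y_1\|^3}{\sqrt n},
\]
where $Z\sim N(0,I_d)$. The only real issue is that our covariance $\Sigma$ is singular, of rank $q-1$. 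I will handle this by working inside the hyperplane $L$.

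\emph{Step 1 (i.i.d. representation).} Let $U_k\in\{1,\ldots,q\}$ be the block containing $r_k$, and set $X_k:=e_{U_k}$. The $X_k$ are i.i.d. with $\mathbb{E}X_k=x^*$ and $\operatorname{Cov}(X_k)=\Sigma$. Moreover $y(G_n)=\sum_k X_k$, so
\[
\omega(G_n)-x^*=n^{-1/2}\sum_{k=1}^n (X_k-x^*).
\]
Each $X_k-x^*$ lies in $L_0:=\{x\in\R^q: \mathbf 1^\top x=0\}$. Hence $\omega(G_n)$ and $\omega^*$ both take values in $L=x^*+L_0$.

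\emph{Step 2 (reduction to a nondegenerate problem in $\R^{q-1}$).} Fix an orthonormal basis of $L_0$, collected as the columns of a $q\times(q-1)$ matrix $U$, and define $\tilde\Sigma:=U^\top\Sigma U$.

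First, $\tilde\Sigma$ is positive definite. Since every entry of $x^*$ is positive, for any $v\in L_0\setminus\{0\}$ we have $v^\top\Sigma v=\sum_i x_i^* v_i^2-(x^{*\top}v)^2>0$. This follows from Cauchy--Schwarz with weights $x^*_i$: equality would force $v$ to be constant, and a constant vector in $L_0$ is zero.

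Next, define
\[
Y_k:=\tilde\Sigma^{-1/2}U^\top(X_k-x^*)\in\R^{q-1}.
\]
These are i.i.d. with mean zero and identity covariance. They are also bounded, since $\|X_k-x^*\|\le\sqrt2$, so $\mathbb{E}\|Y_1\|^3\le (\sqrt2)^3\|\tilde\Sigma^{-1/2}\|^3<\infty$.

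Now consider the map $T:L\to\R^{q-1}$, $T(\omega):=\tilde\Sigma^{-1/2}U^\top(\omega-x^*)$. It is an affine bijection. It sends $\omega(G_n)$ to $n^{-1/2}\sum_k Y_k$, and it sends $\omega^*$ to a standard Gaussian $Z$, because $\omega^*-x^*$ is supported on $L_0$ with covariance $\Sigma=U\tilde\Sigma U^\top$.

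\emph{Step 3 (transfer of convex sets).} For any convex $Q\subseteq L$, the image $T(Q)$ is convex in $\R^{q-1}$. We have $\bP(\omega(G_n)\in Q)=\bP(n^{-1/2}\sum_k Y_k\in T(Q))$, and likewise $\bP(\omega^*\in Q)=\bP(Z\in T(Q))$. Applying the Bentkus bound above with $d=q-1$ gives the claim with
\[
\gamma:=c\,(q-1)^{1/4}\,\mathbb{E}\|Y_1\|^3,
\]
which is uniform over all convex $Q$.

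I expect the main obstacle to be the degeneracy of $\Sigma$. A careless application of a Berry--Esseen theorem in $\R^q$ fails, because those theorems require a nonsingular covariance. Step 2 resolves this, and it hinges on the positivity of $x^*$. The lattice (discrete) nature of the multinomial is not a problem, since Bentkus' bound holds for arbitrary distributions with finite third moment.
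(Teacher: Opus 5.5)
Your proposal is correct and follows essentially the same route as the paper. Both use an affine bijection between $L$ and $\R^{q-1}$ that whitens the rank-$(q-1)$ covariance: the paper factors $\Sigma=UU^\top$ and writes $Y_t-x^*=UZ_t$, while you use an orthonormal basis of $L_0$ together with $\tilde\Sigma^{-1/2}$. Both then apply Bentkus' Berry--Esseen bound for convex sets in dimension $q-1$, and your explicit check that $\tilde\Sigma$ is positive definite (using $x^*>0$) supplies the rank claim that the paper only asserts.
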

\begin{proof}
Choose $U\in\mathbb R^{q\times (q-1)}$ with full column rank such that $\Sigma=UU^\top$. Then the affine map $\Phi:\mathbb R^{q-1}\to L$ defined by $\Phi(z)=x^*+Uz$ is a bijection. Let $Y_1,\dots,Y_n\in\{e_1,\dots,e_q\}$ be i.i.d.\ with $\bP(Y_t=e_i)=x_i^*$. Then $x(G_n)=\frac1n\sum_{t=1}^n Y_t$, $\mathbb E[Y_t]=x^*$, and $\operatorname{Cov}(Y_t)=\Sigma$. Since $Y_t-x^*\in\operatorname{Im}(\Sigma)=\operatorname{Im}(U)$, there exist random vectors $Z_t\in\mathbb R^{q-1}$ such that $Y_t-x^*=UZ_t$. One checks that $\mathbb E[Z_t]=0$ and $\operatorname{Cov}(Z_t)=I_{q-1}$. Define $T_n:=\frac1{\sqrt n}\sum_{t=1}^n Z_t$. Then $\omega(G_n)=x^*+UT_n=\Phi(T_n).$
Let $G\sim N(0,I_{q-1})$. Since $x^*+UG\sim N(x^*,\Sigma)$, we have $\omega^*=\Phi(G)$. For any convex set $Q\subset L$, let $A:=\Phi^{-1}(Q)\subset\mathbb R^{q-1}$. As $\Phi$ is affine, $A$ is convex. So,
$\bP(\omega(G_n)\in Q)=\bP(T_n\in A)$ and $\bP(\omega^*\in Q)=\bP(G\in A)$. Hence
\[
|\bP(\omega(G_n)\in Q)-\bP(\omega^*\in Q)|
=
|\bP(T_n\in A)-\bP(G\in A)|.
\]
Since $Z_t$ takes finitely many values, so the third moment $\mathbb{E}\|Z_t\|^3 < \infty$. Applying the multivariate Berry--Esseen bound for convex sets ~\cite{bentkus2003dependence} yields
\[
|\bP(T_n\in A)-\bP(G\in A)|
\le
400\,(q-1)^{1/4}\frac{\mathbb E\|Z_t\|^3}{\sqrt n}.
\]
Thus the result holds with $\gamma:=400\,(q-1)^{1/4}\mathbb E\|Z_t\|^3$.
\end{proof}

These results together form the core probabilistic toolkit for analyzing the convergence of $x(G_n)$, and are key to studying the convergence of the $H$-property.

\subsection{On the upper bounds of exponential convergence}\label{ssec:expupperbound}

In this subsection, we establish the upper bounds in~\eqref{eq:case1} and~\eqref{eq:case2}. The latter is more or less straightforward, which follows directly from Lemmas~\ref{lem:upexpcvg} and~\ref{lem:edgecone}.

\begin{proposition}\label{prop:case1_2upperbound}
The following two items hold:
\begin{enumerate}
    \item If $W$ does not satisfy Condition $B'$, then there exist positive constants $c$ and $\lambda$ such that
    $$
    P_n(W) \leq c \, e^{-\lambda n}. 
    $$
    \item If $W$ satisfies Conditions $A$ and $B$, then there exist positive constants $c$ and $\lambda$ such that
    $$
    1 - P_n(W) \leq c \, e^{-\lambda n}.
    $$
\end{enumerate}

\end{proposition}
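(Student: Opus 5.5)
For item~1, note that $G_n\sim W$ is $S$-partite via the map sending $v_i$ to the block containing $r_i$. Lemma~\ref{lem:edgecone} then gives an inclusion of events: if $G_n$ has a cycle cover, then $x(G_n)\in\cX$. Hence $P_n(W)\le \bP(x(G_n)\in\cX)$. Since $\cX$ is closed and $x^*\notin\cX$, we have $d:=\operatorname{dist}(x^*,\cX)>0$, and the event $x(G_n)\in\cX$ forces $\|x(G_n)-x^*\|_2\ge d$. Applying Lemma~\ref{lem:upexpcvg} with a slightly smaller radius $d/2$ gives
\[
P_n(W)\le \bP(\|x(G_n)-x^*\|_2> d/2)\le 2q\exp(-nd^2/(2q)),
\]
which is the claimed bound with $c=2q$ and $\lambda=d^2/(2q)$.

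For item~2, I will split on the event that $x(G_n)$ lies in a small closed ball $\mathcal{B}$ around $x^*$. Condition $A$ together with Lemma~\ref{lem:fullrankness} gives $\dim\cX=q$, so $\rint\cX$ is open in $\R^q$. Combined with Condition $B$ and the positivity of the entries of $x^*$, this lets me choose $\rho>0$ such that $\mathcal{B}:=\{x\in\Delta^{q-1}:\|x-x^*\|_2\le\rho\}$ is a closed subset of both $\rint\Delta^{q-1}$ and $\cX$, and every $x\in\mathcal{B}$ has $x_i\ge x^*_{\min}/2$. Then
\[
1-P_n(W)\le \bP(x(G_n)\notin\mathcal{B})+\bP(\neg\mathcal{E}_n\mid x(G_n)\in\mathcal{B})+\bP(\text{no cycle cover}\mid x(G_n)\in\mathcal{B},\,\mathcal{E}_n).
\]
The first term is exponentially small by Lemma~\ref{lem:upexpcvg}. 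The second is exponentially small by Lemma~\ref{lem:expconvregular}, where $\epsilon=\epsilon(\delta,q)$ is supplied by the Blow-up Lemma~\ref{lem:blowup}. For the third term, I aim to show it vanishes once $n$ is large. On $\{x(G_n)\in\mathcal{B}\}$, the vector $y=y(G_n)=n\,x(G_n)$ is integer-valued and lies in $\cX$, because $\cX$ is a cone. Moreover $y_i\ge n x^*_{\min}/2\ge 3$ for large $n$, so Lemma~\ref{lem:hamiltonicityofKx} gives a cycle cover of $K_y$. On $\mathcal{E}_n$, Lemma~\ref{lem:blowup} then transfers this cycle cover to $G_n$. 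Finitely many small $n$ are absorbed by enlarging $c$.

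The main obstacle is that Lemma~\ref{lem:blowup} is stated for $S$ without self-loops, whereas Condition $A$ may hold only because $S$ has self-loops. These matter: an odd cycle is what makes $\cX$ full-dimensional. In that case the super-regularity definition constrains only pairs of distinct parts, and Lemma~\ref{lem:hamiltonicityofKx} may produce a cover of $K_y$ that uses edges inside a part. I would handle this by passing to a refined loopless skeleton. Each looped node $u_i$ is split into two nodes $u_i',u_i''$ joined by an edge and both joined to the neighbors of $u_i$. Correspondingly, $\pi^{-1}(u_i)$ is split into two halves at random. Within-part edges of $G_n$ are independent with probability $p_{ii}\ge\delta$, so the two halves form a super-regular pair with exponentially high probability, by the same argument as in Lemma~\ref{lem:expconvregular}. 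What remains is to check that the refined vector $y$ still lies in the refined edge cone. For this I would take a representation $x^*=\sum_j c_jz_j$ with all $c_j>0$, which exists by Condition $B$, and split the loop weight evenly between the two halves. If this refinement turns out to be delicate, the fallback is to invoke the general form of the Blow-up Lemma directly, with loops treated as dense regular pairs within a part.
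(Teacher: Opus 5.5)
Your proposal is correct and follows the paper's own route. Item~1 goes through Lemma~\ref{lem:edgecone} and the Hoeffding bound of Lemma~\ref{lem:upexpcvg}, and item~2 through a small ball around $x^*$ inside $\cX$ combined with Lemmas~\ref{lem:expconvregular}, \ref{lem:hamiltonicityofKx} and the Blow-up Lemma. You remove self-loops by splitting each looped block into two halves and discarding within-half edges, which is exactly the paper's passage to the subgraphon $W'\le W$. You are in fact more careful than the paper about the radius $d/2$, intersecting the ball with $\Delta^{q-1}$, and the requirement $y_i\ge 3$.

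The one check you leave implicit is that the refined skeleton $S'$ still has an odd cycle. You need this so that $\cX'$ is full-dimensional and your strictly positive representation of the refined concentration vector gives a whole neighborhood inside $\cX'$. It holds because $u_i',u_i'',u_k$ form a triangle for any neighbor $u_k\neq u_i$ of a looped $u_i$, and such a neighbor exists when $q\ge 2$. For $q=1$ you must first refine the partition; this matches the paper's own caveat that $S$ have at least two nodes.
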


\begin{proof}
We first establish item~1 of the proposition. Let $\tau := \inf_{y\in \cX} \|x^*-y\|_2.$
\begin{align}
P_n(W)
&\le \bP\left(x(G_n)\in \cX\right)\label{eq:case2upperbound1} \\
&\le \bP\left(\|x(G_n)-x^*\|_2\ge \tau\right)\notag 
\\
&\le 2q\exp\!\left(-2n\frac{\tau^2}{q}\right)\label{eq:case2upperbound3},
\end{align}
where~\eqref{eq:case2upperbound1} follows from Lemma~\ref{lem:edgecone}  
and~\eqref{eq:case2upperbound3} follows from Lemma~\ref{lem:upexpcvg}. 

Next, we establish item~2 of the proposition.  
For ease of presentation, we assume for now that $S$ does not have a self-loop in order to apply the Blow-up Lemma (Lemma~\ref{lem:blowup}). We will describe how to relax this assumption at the end of this subsection.

Let $\delta$ be given in~\eqref{eq:delta}, and  $\epsilon$ be such that the statement of Lemma~\ref{lem:blowup} is satisfied. 
Then,  
\begin{equation*}
   P_n(W) \geq \bP(\mathcal{E}_n, x(G_n)\in \cX),
\end{equation*}
 where $\mathcal{E}_n$ is introduced in~\eqref{eq:superregular} and the inequality follows from Lemmas~\ref{lem:hamiltonicityofKx} and~\ref{lem:blowup}. 
Choose a closed ball $\mathcal{B}_\delta(x^*)$ centered at $x^*$ with radius $\delta >0$ and $\mathcal{B}_\delta(x^*) \subseteq \rint \cX$, 
it follows that
\begin{align}
    &1-P_n(W) \notag\\
    &\le 1 - \bP(\mathcal{E}_n, x(G_n) \in \cX)\notag\\
    & \le 1 -  \bP(\mathcal{E}_n, x(G_n) \in \mathcal{B}_\delta(x^*))\notag\\
    &= 1 - \bP(\mathcal{E}_n \mid x(G_n) \in \mathcal{B}_\delta(x^*))\bP(x(G_n) \in \mathcal{B}_\delta(x^*))\notag\\
            &\le \bP(x(G_n) \notin \mathcal{B}_\delta(x^*)) + \bP(\neg\mathcal{E}_n \mid x(G_n) \in \mathcal{B}_\delta(x^*))\label{eq:case1uppereq1}.
\end{align}
We now establish upper bounds for the two terms of~\eqref{eq:case1uppereq1}. To bound the first term, we have
\begin{align}
    \bP(x(G_n) \notin \mathcal{B}_\delta(x^*))
    & \le 1 - \bP(\|x^*-x(G_n)\|_2 \le \delta) \notag\\
    & \le 2q\exp\!\left(-\frac{2n\delta^2}{q}\right). \label{eq:case1uppereq2}
\end{align}
By Lemma~\ref{lem:upexpcvg}, the inequality~\eqref{eq:case1uppereq2} holds. The second term in equation~\eqref{eq:case1uppereq1} exponentially decays by Lemma~\ref{lem:expconvregular}. Combining the above bounds, we conclude that there exist positive constants $c$ and $\lambda$ such that
$1 - P_n(W) \leq c \, e^{-\lambda n}$. 
\end{proof}

\begin{remark}\normalfont
At the end of this subsection, we briefly discuss how to handle the proof of the second item of Proposition~\ref{prop:case1_2upperbound} for the case where $S$ has a self-loop. One may start by preprocessing each diagonal block: split it into $2$-by-$2$ sub-blocks of equal size and set the value of two diagonal sub-blocks zero, while leaving the value of two off-diagonal sub-blocks unchanged. 
We denote the resulting step-graphon by $W'$. Then the skeleton graph $S'$ associated with $W'$ has no self-loop. Moreover, using similar arguments in~\cite{chen2025hamiltonicity}, we can show that if $W$ satisfies Condition $\star$, for $\star \in \{A, B, B'\}$ and with $S$ having at least $2$ nodes, 
then so does $W'$.  
Since $W' \leq W$ pointwise, $1-P_n(W)\le 1-P_n(W')$. 
Therefore, for proving the second item Proposition~\ref{prop:case1_2upperbound}, it suffices to consider $W'$. \hfill{\qed}
\end{remark}

\subsection{On the upper bounds of root-$n$ convergence}\label{ssec:rootnupperbound}
In this subsection, we show the upper bounds in~\eqref{eq:case3} and ~\eqref{eq:case4}. We first prove~\eqref{eq:case4}, then~\eqref{eq:case3} is obtained by considering a special case of~\eqref{eq:case4}.
\begin{proposition}\label{prop:rootnupperbounds}
The following two items hold:
\begin{enumerate}
\item If $W$ satisfies Condition $A$ and $B'$, but not $B$, then there exist a nonzero $p^*\in (0,1)$ and a positive constant $c$ such that
\begin{equation}\label{eq:case4upper}
|P_n(W) - p^*| \leq \frac{c}{\sqrt{n}}.  
\end{equation}
\item If $W$ satisfies Condition $B'$, but not $A$, then there exist a positive constant $c$ such that 
\begin{equation}\label{eq:case3upper}
P_n(W) \leq \frac{c}{\sqrt{n}}.
\end{equation}
\end{enumerate}
\end{proposition}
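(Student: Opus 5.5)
The plan is to sandwich $P_n(W)$ between two probabilities concerning the empirical concentration vector $x(G_n)$. Each of these will then be compared with its Gaussian limit through Lemma~\ref{lem:berryessen}.

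\textbf{Upper bound.} Lemma~\ref{lem:edgecone} gives $P_n(W)\le \bP(x(G_n)\in\cX)$. Since $x(G_n)$ lies in $L$, the event $x(G_n)\in\cX$ is the event $\omega(G_n)\in \Omega_n$, where $\Omega_n:=\{\omega\in L\mid x^*+(\omega-x^*)/\sqrt n\in\cX\}$. This set is convex, so Lemma~\ref{lem:berryessen} gives
\[
\bP(x(G_n)\in\cX)\le \bP(\omega^*\in\Omega_n)+\gamma/\sqrt n .
\]
We next remove the dependence of $\Omega_n$ on $n$. Near $x^*$ the cone $\cX$ coincides with the tangent cone $\{x\mid v_k^\top x\ge0,\ H_k\in\mathcal H(x^*)\}$: constraints with $v_k^\top x^*>0$ stay inactive inside a ball of some radius $r$ around $x^*$. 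Because $v_k^\top x^*=0$ for $H_k\in\mathcal H(x^*)$, these active constraints are invariant under the scaling $\omega-x^*\mapsto(\omega-x^*)/\sqrt n$. Hence $\Omega_n\subseteq\Omega^*$, and $\Omega_n$ agrees with $\Omega^*$ inside a ball of radius $r\sqrt n$. It follows that $\bP(\omega^*\in\Omega_n)\le\bP(\omega^*\in\Omega^*)=p^*$, using Lemma~\ref{lem:residualcase} for the last equality. This settles the upper half of item~1.

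For item~2, Condition $A$ fails, so by Lemma~\ref{lem:fullrankness} $\cX$ has dimension $q-1$. It therefore lies in a hyperplane $\{v^\top x=0\}$ that must contain $x^*$. The set $\Omega^*$ is then contained in $\{v^\top\omega=0\}\cap L$, which is a proper affine subspace of $L$ (since $v$ is not parallel to $\mathbf 1$ on $L$). The nondegenerate Gaussian $\omega^*$ on $L$ gives this set probability $0$. Consequently $P_n(W)\le 0+\gamma/\sqrt n$, which is~\eqref{eq:case3upper}. Lemma~\ref{lem:residualcase} is not needed here; the argument only uses that $\Omega_n$ is convex and lies in a null set.

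\textbf{Lower bound for item~1.} We have $P_n(W)\ge\bP(\mathcal E_n,\ x(G_n)\in\cX\cap\mathcal B)$, where $\mathcal B$ is a closed ball around $x^*$ inside $\rint\Delta^{q-1}$. This holds by Lemmas~\ref{lem:hamiltonicityofKx} and~\ref{lem:blowup}: the condition $y_i\ge3$ holds automatically on $\mathcal B$ for large $n$. Self-loops are handled by the preprocessing remark, with the transformed graphon $W'$ sharing the same cone geometry. Using Lemma~\ref{lem:expconvregular}, this is at least $\bP(x(G_n)\in\cX)-\bP(x(G_n)\notin\mathcal B)-ce^{-\lambda n}$, and Lemma~\ref{lem:upexpcvg} makes the middle term exponentially small. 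Applying Lemma~\ref{lem:berryessen} once more,
\[
\bP(x(G_n)\in\cX)\ge\bP(\omega^*\in\Omega_n)-\gamma/\sqrt n\ge p^*-\bP(\|\omega^*-x^*\|\ge r\sqrt n)-\gamma/\sqrt n .
\]
The Gaussian tail term is exponentially small. Combining everything yields $|P_n(W)-p^*|\le c/\sqrt n$. Finally, $p^*\in(0,1)$ because $\Omega^*$ is a proper polyhedral cone with nonempty interior in $L$: the cone $\cX$ is full-dimensional under Condition~$A$, and $x^*\in\partial\cX$.

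\textbf{Main obstacle.} The delicate step is the local identification of $\cX$ with its tangent cone at $x^*$. In particular, one must check that each $v_k$ with $v_k^\top x^*>0$ stays positive on a ball of uniform radius, and that $\mathcal H(x^*)$ captures all active facets. This is a finite polyhedral fact: there are finitely many facets, so $r$ can be taken as the minimum over them. Nonetheless it is the step to state carefully.
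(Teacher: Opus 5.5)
Apart from the self-loop step, your argument follows the paper's proof. It uses the same sandwich $\bP(\mathcal E_n,\,x(G_n)\in\cX\cap\mathcal B)\le P_n(W)\le\bP(x(G_n)\in\cX)$, Lemma~\ref{lem:berryessen} on convex sets, the inclusions $\Omega^*\cap\mathcal B_{r\sqrt n}(x^*)\subseteq\Omega_n\subseteq\Omega^*$ (the paper's Lemma~\ref{lem:Omega_inclusion}), a Gaussian tail bound and Lemma~\ref{lem:expconvregular}. For item~2 it likewise uses that $\Omega_n$ lies in a proper affine subspace of $L$. Your small deviations are fine: defining $\Omega_n$ inside $L$ (which makes $\Omega_n\subseteq\Omega^*$ literally true), removing the ball via Lemma~\ref{lem:upexpcvg} instead of applying Berry--Esseen to $T_n(\mathcal B\cap\cX)$, and checking $y_i\ge 3$.

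The step that fails is ``self-loops are handled by the preprocessing remark, with $W'$ sharing the same cone geometry.''

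\textbf{What the remark gives.} It only guarantees that $W'$ inherits Conditions $A$, $B$, $B'$. That suffices when the target limit is $1$. Here you need $P_n(W')\ge p^*-c/\sqrt n$ with the \emph{same} $p^*$. Splitting a looped community at the graphon level can create new active facets at ${x'}^*$ and strictly lower the limit.

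\textbf{Counterexample.} Take $S$ with edges $(u_1,u_2)$, $(u_2,u_3)$, $(u_3,u_3)$ and $x^*=(0.3,0.3,0.4)$. Then $\cX=\{x_1\ge 0,\ x_2\ge x_1,\ x_1+x_3\ge x_2\}$, only $x_2\ge x_1$ is active, and $p^*=1/2$. For $W'$, $u_3$ is split into $u_{3'},u_{3''}$ and ${x'}^*=(0.3,0.3,0.2,0.2)$. Near ${x'}^*$ the new edge cone is $\cX'=\{x_2-x_1\ge|x_{3'}-x_{3''}|\}$. The limits $\omega_2-\omega_1$ and $\omega_{3'}-\omega_{3''}$ are independent centered Gaussians with variances $0.6$ and $0.4$. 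Hence, with $x'(G_n)$ the refined empirical vector, $P_n(W')\le\bP(x'(G_n)\in\cX')\to\bP(\omega_2-\omega_1\ge|\omega_{3'}-\omega_{3''}|)\approx 0.28<1/2$. So the comparison $P_n(W)\ge P_n(W')$ cannot give the lower bound. The paper's proof invokes Lemma~\ref{lem:blowup} at this point without addressing loops, so it does not supply this step either.

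\textbf{A possible repair.} Split each looped community only after observing $y(G_n)$.
\begin{enumerate}
\item The split depends on the coordinates alone, so the refined $S'$-partite graph is still super-regular with probability $1-ce^{-\lambda n}$.
\item You can then choose the part sizes so that the refined count vector lies in $\cX'$. The linear lift that halves the looped coordinates maps $\rint\cX$ into $\rint\cX'$.
\item Integer rounding therefore only matters when $x(G_n)$ is within $O(1/n)$ of an active facet of $\cX$. That event has probability $O(1/\sqrt n)$ by Lemma~\ref{lem:berryessen} and Gaussian anti-concentration, so the $1/\sqrt n$ rate survives.
\end{enumerate}
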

Before proceeding to the proof, we present an auxiliary object $ \Omega_n$ here. We start by defining an affine transformation $T_n: \R^q \to \R^q$ as follows:
\begin{equation}
    x\in \R^q \mapsto T_n(x):= \sqrt{n}(x - x^*) + x^*,\label{eq:affinetransformation}
\end{equation}
which has the inverse given by
$$
\omega\in \R^q \mapsto T^{-1}_n(\omega):=\frac{1}{\sqrt{n}}(\omega - x^*) + x^*.
$$
For each $n\in \mathbb{N}$, we define  
\begin{equation}\label{eq:defsetomegan}
\Omega_n:= \left\{ T_n(x) \mid x \in \cX \right \}.\notag
\end{equation} 
Now we state the following lemma~\cite{gao2025h}:
\begin{lemma}\label{lem:residualeq}
    The following two items hold:
    \begin{enumerate}
        \item $\bP(x(G_n) \in \cX) = \bP(\omega(G_n) \in  \Omega_n)$. 
        \item $\lim_{n\to \infty}\bP\left(x(G_n) \in \cX\right) =  \bP\left(\omega^* \in  \Omega^*\right)$.
    \end{enumerate}
\end{lemma}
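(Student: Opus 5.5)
The first item is a change of variables, and the second follows from Lemma~\ref{lem:berryessen} combined with a monotone-limit argument on the sets $\Omega_n\cap L$.

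\textbf{Item 1.} By definition, $\omega(G_n) = T_n(x(G_n))$. The map $T_n$ in~\eqref{eq:affinetransformation} is an affine bijection of $\R^q$ with inverse $T_n^{-1}$. Hence $x(G_n)\in\cX$ if and only if $T_n(x(G_n))\in T_n(\cX)=\Omega_n$, which gives
\[
\bP(x(G_n)\in\cX)=\bP(\omega(G_n)\in\Omega_n).
\]

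\textbf{Item 2: structure of $\Omega_n$.} Since $\mathbf 1^\top x(G_n)=1$, the vector $\omega(G_n)$ lies in $L$, so only $\Omega_n\cap L$ matters. I would describe $\cX$ by the inequalities $v_k^\top x\ge 0$ over all $H_k\in\mathcal H$. If $\cX$ is not full-dimensional (Lemma~\ref{lem:fullrankness}), I would add the defining linear equation of its span, written as two opposite inequalities. Each such equation holds at $x^*$ because $x^*\in\cX$, so it counts as an active constraint. Substituting $x=T_n^{-1}(\omega)$, we get $\omega\in\Omega_n$ if and only if
\[
\Bigl(1-\tfrac{1}{\sqrt n}\Bigr)v_k^\top x^* + \tfrac{1}{\sqrt n}\,v_k^\top\omega \;\ge\; 0 \quad\text{for all }k.
\]
Two cases follow:
\begin{itemize}
\item For active constraints ($v_k^\top x^*=0$), the condition reduces to $v_k^\top\omega\ge 0$, independent of $n$. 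These are exactly the constraints defining $\Omega^*$.
\item For inactive constraints ($v_k^\top x^*>0$), the condition is $v_k^\top\omega\ge-(\sqrt n-1)v_k^\top x^*$. Its right-hand side decreases to $-\infty$.
\end{itemize}
Therefore $\Omega_n\cap L$ is a nested increasing family of convex sets contained in $\Omega^*$, and its union is all of $\Omega^*$. Equivalently, $\sqrt n(\cX-x^*)$ increases to the tangent cone of the polyhedral cone $\cX$ at $x^*$.

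\textbf{Item 2: passing to the limit.} I would use the split
\[
|\bP(\omega(G_n)\in\Omega_n)-\bP(\omega^*\in\Omega^*)| \le |\bP(\omega(G_n)\in\Omega_n\cap L)-\bP(\omega^*\in\Omega_n\cap L)| + \bP(\omega^*\in\Omega^*\setminus\Omega_n).
\]
\begin{itemize}
\item The first term is at most $\gamma/\sqrt n$ by Lemma~\ref{lem:berryessen}, since $\Omega_n\cap L$ is a convex subset of $L$.
\item The second term tends to $0$ by continuity of measure along the increasing union $\Omega_n\cap L\uparrow\Omega^*$.
\end{itemize}
Together with item~1, this proves item~2.

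The main obstacle is the set-convergence step. One must check that the facet description captures all constraints of $\cX$, including the degenerate, lower-dimensional case, and that no point of $\Omega^*$ is missed. The increasing-union argument handles the latter cleanly, because each inactive constraint is eventually satisfied for every fixed $\omega$.

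It would also help to quantify the second term, as needed later for the $\sqrt n$ rates. That term is bounded by a Gaussian tail: $\Omega^*\setminus\Omega_n$ lies in the union of the sets $\{v_k^\top\omega<-(\sqrt n-1)v_k^\top x^*\}$ over inactive $k$, each of which has probability $e^{-\Theta(n)}$ under $\omega^*$.
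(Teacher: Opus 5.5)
The paper does not prove this lemma; it quotes it from \cite{gao2025h}. So there is no in-paper argument to compare against, and your proof has to stand on its own. It essentially does.

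Item~1 is just the bijectivity of $T_n$. For item~2, your split is legitimate because $\Omega_n\cap L\subseteq\Omega^*$. The first term is controlled by Lemma~\ref{lem:berryessen}, since $\Omega_n\cap L$ is a convex subset of $L$. Your active/inactive computation then gives $\Omega_n\cap L\uparrow\Omega^*$ when $\cX$ is full-dimensional. In that case the paper's $H_k$ (supporting hyperplanes through $q-1$ independent generators) are exactly the facet hyperplanes, so $\{x : v_k^\top x\ge 0 \text{ for all } H_k\in\mathcal H\}$ really is $\cX$.

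Compared with a purely qualitative argument from~\eqref{eq:omegagntostar}, your route is quantitative. With your Gaussian-tail remark it gives, in one stroke,
\[
|\bP(x(G_n)\in\cX)-\bP(\omega^*\in\Omega^*)|\le\gamma/\sqrt n+ce^{-\lambda n}.
\]
This is more than the paper extracts at this point. It would even let one bypass Lemma~\ref{lem:Omega_inclusion} in the proof of Proposition~\ref{prop:rootnupperbounds}, by discarding $\{x(G_n)\notin\mathcal{B}_r(x^*)\}$ with Lemma~\ref{lem:upexpcvg} instead.

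The one inaccuracy is in the degenerate case you flagged. If $S$ has no odd cycle, any $q-1$ independent columns of $Z$ span $\operatorname{span}(\cX)$ itself, so $\mathcal H$ contains only that hyperplane. Then ``facet inequalities plus the span equation'' describes $\operatorname{span}(\cX)$, not $\cX$. You must also include the relative facets of $\cX$ inside its span; any finite H-representation (Minkowski--Weyl) works in your computation.

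With that fix, $\Omega_n\cap L$ increases to $(x^*+C)\cap L$, where $C=\bigcup_{t>0}t(\cX-x^*)$ is the tangent cone. Reading $\Omega^*$ as you do, with the span equation as an active equality, this can be strictly smaller than $\Omega^*=\operatorname{span}(\cX)\cap L$. For example, take the path $u_1u_2u_3u_4$ and $x^*=\frac14\mathbf 1$. Then $x^*$ lies on the relative facet generated by the incidence vectors of $(u_1,u_2)$ and $(u_3,u_4)$, where $x_1=x_2$. So your claim that the union is all of $\Omega^*$ fails there.

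The failure is harmless. Both sets lie in a proper affine subspace of $L$, which is $\omega^*$-null because $\Sigma$ has rank $q-1$. Hence the second term of your split is identically zero and both sides of item~2 equal $0$.
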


We also need the following result:

\begin{lemma}\label{lem:Omega_inclusion}
Let $\mathcal{B}_r(x^*)$ be a closed ball centered at $x^*$ with radius $r>0$, and define $\Omega_n^{\mathcal{B}} := T_n(\mathcal{B}_r(x^*) \cap \cX).$ Then, for all sufficiently large $n$,
\[
\Omega^* \cap \mathcal{B}_{r\sqrt{n}}(x^*) \subseteq \Omega_n^{\mathcal{B}}.
\]
\end{lemma}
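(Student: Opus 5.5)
The plan is to pull $\Omega^*\cap\mathcal{B}_{r\sqrt n}(x^*)$ back through $T_n$. Take $\omega\in\Omega^*$ with $\|\omega-x^*\|_2\le r\sqrt n$ and put
\[
x:=T_n^{-1}(\omega)=x^*+\tfrac{1}{\sqrt n}(\omega-x^*).
\]
Then $\omega=T_n(x)$, so it suffices to show $x\in\mathcal{B}_r(x^*)\cap\cX$. The ball condition is immediate: $\|x-x^*\|_2=\|\omega-x^*\|_2/\sqrt n\le r$. Membership in $\cX$ will be checked with the facet description of the cone, $\cX=\{y:\ v_k^\top y\ge 0 \text{ for all } H_k\in\mathcal{H}\}$. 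This description is valid under Condition $A$, where $\dim\cX=q$ by Lemma~\ref{lem:fullrankness}. In the bipartite case one adds the linear equation cutting out the $(q-1)$-dimensional span of $\cX$; $x$ satisfies it because both $x^*$ and $\omega$ lie in the relevant subspace/affine hull.

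\emph{Active facets.} Let $H_k\in\mathcal{H}(x^*)$, so $v_k^\top x^*=0$. By linearity,
\[
v_k^\top x=\bigl(1-\tfrac{1}{\sqrt n}\bigr)v_k^\top x^*+\tfrac{1}{\sqrt n}v_k^\top\omega=\tfrac{1}{\sqrt n}v_k^\top\omega\ge 0,
\]
since $\omega\in\Omega^*$. This holds for every $n$ and needs no control on $\|\omega-x^*\|$. Also $\mathbf 1^\top x=1$ because $\omega,x^*\in L$.

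\emph{Inactive facets.} Let $H_k\in\mathcal{H}\setminus\mathcal{H}(x^*)$ and set $\eta:=\min_{H_k\notin\mathcal{H}(x^*)}v_k^\top x^*>0$; this minimum is positive because $\mathcal{H}$ is finite. Since $\|v_k\|=1$,
\[
v_k^\top x\ \ge\ \eta-\|x-x^*\|_2\ \ge\ \eta-r .
\]

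This step is the main obstacle. The bound on inactive facets does not improve as $n$ grows, because the ball $\mathcal{B}_{r\sqrt n}(x^*)$ is scaled by exactly the factor that $T_n^{-1}$ contracts by. The argument above therefore gives the inclusion for every $n$ when $r\le\eta$, the distance from $x^*$ to the inactive facets. For $r>\eta$ this uniform estimate is not available. The inclusion can genuinely fail there, since $\Omega^*$ is an unbounded cone while $\cX\cap L$ is bounded. What survives for general $r$ is a pointwise statement: for each fixed $\omega\in\Omega^*$, $T_n^{-1}(\omega)\to x^*$, so $\omega\in\Omega_n^{\mathcal{B}}$ for all large $n$.

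I would therefore prove the lemma for $r\le\eta$ as above, and read the statement with the radius shrunk to $\min\{r,\eta\}$. This is harmless for the application: in the proof of Proposition~\ref{prop:rootnupperbounds}, $\mathcal{B}_r(x^*)$ is only used to localize $x(G_n)$ near $x^*$, and Lemma~\ref{lem:upexpcvg} makes the complement exponentially small for any fixed radius. The remaining difference, $\bP\bigl(\omega^*\in\Omega^*\setminus\mathcal{B}_{r\sqrt n}(x^*)\bigr)$, is a Gaussian tail and is likewise exponentially small.
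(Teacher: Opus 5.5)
Your argument is the same as the paper's in its first two steps: pull back by $T_n^{-1}$, then get the active facets from $v_k^\top x=\tfrac{1}{\sqrt n}v_k^\top\omega\ge 0$. At the inactive facets the two differ, and your treatment is correct while the paper's is not. Your diagnosis is right.

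The paper infers from $v_j^\top x^*>0$ that $v_j^\top x>0$ ``for all sufficiently large $n$''. That threshold depends on $\omega$, but $\omega$ ranges over the $n$-dependent set $\Omega^*\cap\mathcal{B}_{r\sqrt n}(x^*)$, so no uniform threshold exists. More precisely, $\Omega^*-x^*$ is a cone, so $T_n(\Omega^*)=\Omega^*$. Hence $T_n^{-1}$ maps $\Omega^*\cap\mathcal{B}_{r\sqrt n}(x^*)$ onto $\Omega^*\cap\mathcal{B}_r(x^*)$ for every $n$. The lemma is therefore equivalent to the $n$-free inclusion $\Omega^*\cap\mathcal{B}_r(x^*)\subseteq\cX$. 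Your bound $v_k^\top x\ge \eta-r$ settles this for $r\le\eta$, and it fails for large $r$, exactly as you say.

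The failure occurs even for radii the proof of Proposition~\ref{prop:rootnupperbounds} allows. Take $S$ with a self-loop at $u_1$ and edges $(u_1,u_2)$, $(u_2,u_3)$. Then $\cX=\{x:\ x_3\ge 0,\ x_2\ge x_3,\ x_1-x_2+x_3\ge 0\}$. Take $x^*=(1/4,1/2,1/4)$, which lies only on the facet $x_1-x_2+x_3=0$. Then $\Omega^*=\{\omega\in L:\ \omega_2\le 1/2\}$ and $\eta=\sqrt2/8$. Let $d=(0,-1,1)/\sqrt2$ and pick any $r\in(\sqrt2/8,\ \tfrac14\sqrt{3/2})$.
\begin{itemize}
\item The ball $\mathcal{B}_r(x^*)\cap L$ lies in $\rint\Delta^2$.
\item The point $\omega_n=x^*+r\sqrt n\,d$ lies in $\Omega^*\cap\mathcal{B}_{r\sqrt n}(x^*)$.
\item However, $T_n^{-1}(\omega_n)=x^*+rd$ has $x_2-x_3=\tfrac14-\sqrt2\,r<0$, so $\omega_n\notin\Omega_n^{\mathcal{B}}$ for every $n$.
\end{itemize}
Your $\eta$ is sharp in this example.

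Your repair is exactly what Proposition~\ref{prop:rootnupperbounds} needs. It gives $\Omega^*\cap\mathcal{B}_{\min\{r,\eta\}\sqrt n}(x^*)\subseteq T_n(\mathcal{B}_{\min\{r,\eta\}}(x^*)\cap\cX)\subseteq\Omega_n^{\mathcal{B}}$ for all $n$. The Gaussian tail $\bP(\|\omega^*-x^*\|_2>\min\{r,\eta\}\sqrt n)$ is still exponentially small.

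Your bipartite aside is not needed. The lemma is only invoked under Condition $A$, and in the bipartite case the paper's facet description is degenerate anyway.
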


\begin{proof}
Let $\omega \in \Omega^* \cap \mathcal{B}_{r\sqrt{n}}(x^*)$, and define
\[
x := T_n^{-1}(\omega) = x^* + \frac{\omega - x^*}{\sqrt{n}}.
\]
Since $\|\omega - x^*\|_2 \le r\sqrt{n}$, we have $\|x - x^*\|_2 \le r,$
and hence $x \in \mathcal{B}_r(x^*)$. We now show that $x \in \cX$. For any $H_k \in \mathcal H(x^*)$, since $\omega \in \Omega^*$ and $v_k^\top x^* = 0$, we have
\[
v_k^\top x
=
v_k^\top\left(x^* + \frac{\omega - x^*}{\sqrt{n}}\right)
=
\frac{1}{\sqrt{n}} v_k^\top \omega \ge 0.
\]
On the other hand, if $H_j \notin \mathcal H(x^*)$, then $v_j^\top x^* > 0$, and hence $v_j^\top x > 0$ for all sufficiently large $n$. Therefore, $x \in \cX$. It follows that $x \in \mathcal{B}_r(x^*) \cap \cX$, and thus $\omega = T_n(x) \in \Omega_n^{\mathcal{B}}.$
This proves the desired inclusion.
\end{proof}

With the lemmas above, we now prove Proposition~\ref{prop:rootnupperbounds}. 

\begin{proof}[Proof of Proposition~\ref{prop:rootnupperbounds}]
We first prove item~1 and establish~\eqref{eq:case4upper}. Let $\mathcal{B}_r(x^*)$ be a closed ball centered at $x^*$ such that
\[
\mathcal{B}_r(x^*)\subset \rint \Delta^{q-1}.
\]
Such a ball exists since $x^*\in \rint\Delta^{q-1}$. 
Note that
\begin{equation}\label{eq:rootn_sandwich}
\bP(\mathcal E_n,\, x(G_n)\in \mathcal{B}_r(x^*)\cap \cX)
\le P_n(W)
\le \bP(x(G_n)\in \cX),
\end{equation}
where the first inequality holds because the event
$\mathcal E_n\cap \{x(G_n)\in \mathcal{B}_r(x^*)\cap \cX\}$ is sufficient for $G_n$ to have a cycle cover (Lemmas~\ref{lem:hamiltonicityofKx} and~\ref{lem:blowup}), and the second inequality follows from Lemma~\ref{lem:edgecone}, which states that $x(G_n)\in \cX$ is necessary for $G_n$ to have a cycle cover. We next bound the two sides of~\eqref{eq:rootn_sandwich}. More specifically, we provide an upper bound for $\bP(x(G_n)\in \cX)$ and a lower bound for $\bP(\mathcal E_n,\, x(G_n)\in \mathcal{B}_r(x^*)\cap \cX)$. 

For the upper bound for $\bP(x(G_n)\in \cX)$, we first note that $\Omega_n = T_n(\cX)$ is convex, which holds because $\cX$ is convex and $T_n$ is affine. Then, 
there exists a positive constant $c_2$ such that
\begin{align}
\bP(x(G_n)\in \cX)
&=\bP(\omega(G_n)\in \Omega_n)\notag\\
&\le \bP(\omega^*\in \Omega_n)+\frac{c_2}{\sqrt n}\label{eq:prop3eq1}\\
&\le \bP(\omega^*\in \Omega^*)+\frac{c_2}{\sqrt n}\label{eq:prop3subset}\\
&=p^*+\frac{c_2}{\sqrt n},\label{eq:prop3eq2}
\end{align}
where the first equality follows from item~1 of Lemma~\ref{lem:residualeq},~\eqref{eq:prop3eq1} follows from Lemma~\ref{lem:berryessen},~\eqref{eq:prop3subset} holds since $\Omega_n\subseteq \Omega^*$, and~\eqref{eq:prop3eq2} follows from Lemma~\ref{lem:residualcase}. 

For the lower bound of $\bP(\mathcal E_n,\, x(G_n)\in \mathcal{B}_r(x^*)\cap \cX)$, 
we have that
\begin{multline}\label{eq:ExinBX}
\bP(\mathcal E_n,\, x(G_n)\in \mathcal{B}_r(x^*)\cap \cX) =  \bP(x(G_n)\in \mathcal{B}_r(x^*)\cap \cX)  \\ - \bP(\neg \mathcal E_n,\, x(G_n)\in \mathcal{B}_r(x^*)\cap \cX)
\end{multline}
Let $\Omega_n^{\mathcal{B}}:=T_n(\mathcal{B}_r(x^*)\cap \cX)$. Since $\mathcal{B}_r(x^*)\cap \cX$ is convex, $\Omega_n^{\mathcal{B}}$ is convex. Appealing to Lemma~\ref{lem:berryessen} again with $Q = \Omega_n^{\mathcal{B}}$, we obtain that there exists a positive constant $c'_1$ such that  
\begin{align}
\bP(x(G_n)\in \mathcal{B}_r(x^*)\cap \cX)
&=\bP(\omega(G_n)\in \Omega_n^{\mathcal{B}})\notag\\
&\ge \bP(\omega^*\in \Omega_n^{\mathcal{B}})-\frac{c'_1}{\sqrt n}.\notag
\end{align}
By Lemma~\ref{lem:Omega_inclusion},
\begin{align}
\bP(\omega^*\in \Omega_n^{\mathcal{B}})
&\ge \bP\big(\omega^*\in \Omega^*,\, \|\omega^*-x^*\|_2\le r\sqrt n\big)\notag\\
&\ge p^*-\bP\big(\|\omega^*-x^*\|_2>r\sqrt n\big).\notag
\end{align}
Since $\omega^*$ is Gaussian with~$x^*$ its mean, standard concentration arguments~\cite{vershynin2018high} imply that there exist positive constants $c''_1$ and $\lambda''_1$ such that
\[
\bP\big(\|\omega^*-x^*\|_2>r\sqrt n\big)\le c''_1 e^{-\lambda''_1 n}.
\]
Hence,
\begin{equation}\label{eq:lower_prob_BcapX}
\bP(x(G_n)\in \mathcal{B}_r(x^*)\cap \cX)
\ge p^*-\frac{c'_1}{\sqrt n}-c''_1 e^{-\lambda''_1 n}.
\end{equation}
Next, we have that
\begin{align}
& \bP(\neg \mathcal E_n,\, x(G_n)\in \mathcal{B}_r(x^*)\cap \cX) \notag \\
& \leq \bP(\neg \mathcal E_n,\, x(G_n)\in \mathcal{B}_r(x^*)) \notag \\
& \leq \bP(\neg \mathcal E_n \mid x(G_n) \in \mathcal{B}_r(x^*)\cap \cX) \bP(x(G_n) \in \mathcal{B}_r(x^*)\cap \cX) \notag \\
& \leq  c'''_1e^{-\lambda_1'''n}, \label{eq:upperboundnegExinBX} 
\end{align}
where~\eqref{eq:upperboundnegExinBX} follows from Lemma~\ref{lem:expconvregular} for some positive constants $c'''_1$ and $\lambda'''_1$.  Combining~\eqref{eq:ExinBX},~\eqref{eq:lower_prob_BcapX}, and~\eqref{eq:upperboundnegExinBX}, we obtain that
\begin{equation}\label{eq:lower_bound_rootn}
\bP(\mathcal E_n,\, x(G_n)\in \mathcal{B}_r(x^*)\cap \cX)
 \\ \ge
p^*-\frac{c'_1}{\sqrt n}-c''_1 e^{-\lambda''_1 n}-c'''_1 e^{-\lambda'''_1 n}.\notag
\end{equation}

Therefore, there exists a positive constant $c_1$ such that 
\begin{equation}\label{eq:lower_bound_rootn_simple}
\bP(\mathcal E_n,\, x(G_n)\in \mathcal{B}_r(x^*)\cap \cX)
\ge p^*-\frac{c_1}{\sqrt n}.
\end{equation}

Finally, combining~\eqref{eq:rootn_sandwich}, \eqref{eq:prop3eq2}, and \eqref{eq:lower_bound_rootn_simple}, we obtain that
\[
p^*-\frac{c_1}{\sqrt n}
\le P_n(W)
\le p^*+\frac{c_2}{\sqrt n},
\]
which implies that there exists a positive constant $c$ such that~\eqref{eq:case4upper} holds.

Next, we establish~\eqref{eq:case3upper}. 
By Lemma~\ref{lem:fullrankness}, if $W$ contains no odd cycle, then $\dim {\cX} = q-1$ (the codimension of $\cX$ in $\R^q$ is thus $1$). Since affine transformation $T_n$ defined in~\eqref{eq:affinetransformation} preserves dimension, $\Omega^*$ also has codimension one. Also, $\omega^*$ is supported on $L$ and $\Omega^*$ is contained in a proper affine subspace of $L$, it follows that $\bP(\omega^* \in \Omega^*) = 0$. The claim then follows from the same arguments above for establishing the upper bound $\bP(x(G_n)\in \cX)$. Specifically, we have that
$$P_n(W) \leq \bP(x(G_n)\in \cX) \leq \bP(\omega^*\in \Omega^*)  + \frac{c}{\sqrt{n}}  = \frac{c}{\sqrt{n}}.$$
This completes the proof. 
\end{proof}
\section{Numerical Validation}\label{sec:numericalstudy}
In this section, we validate the four items of Theorem~\ref{thm:convergencerate} through numerical studies. We consider 11 step-graphons $W_\star$ for $\star = a, b, c, d, e, f, g, h, i, j, k$. For each step-graphon, we sampled $100,000$ random graphs $G_n \sim W_\star$ and computed the empirical probability $p_\star(n)$ that $G_n$ have a cycle cover. Precisely,
\[
p_\star(n) := \frac{\text{number of } G_n\sim W_\star \text{ has a cycle cover}}{100,000}.
\]
For simplicity, we set the graphon value to be 1 on the support of each $W_\star$. All linear fits reported in the following experiments are computed using least-squares regression.
\subsection{Validation of items~1 and~2}
\begin{figure}[t]
\centering
\begin{subfigure}{0.45\textwidth}
        \centering
        \begin{tikzpicture}[scale=2.3]

        \filldraw [fill=black!47, draw=black!47] (0,1) rectangle (0.3, 0.7); 

        \filldraw [fill=black!0, draw=black!0] (0.3, 0.7) rectangle (0.8, 0.2);

        \filldraw [fill=black!0, draw=black!0] (0.8, 0.2) rectangle (1, 0);
        
        \filldraw [fill=black!60, draw=black!60] (0, 0.7) rectangle (0.3, 0.2);

        \filldraw [fill=black!60, draw=black!60] (0.3, 1) rectangle (0.8, 0.7);

        \filldraw [fill=black!28, draw=black!28] (0, 0.2) rectangle (0.3, 0);

        \filldraw [fill=black!28, draw=black!28] (0.8, 1) rectangle (1, 0.7);

        \filldraw [fill=black!50, draw=black!50] (0.3, 0.2) rectangle (0.8, 0);

        \filldraw [fill=black!50, draw=black!50] (0.8, 0.7) rectangle (1, 0.2);

        \draw [draw=black, very thick] (0,0) rectangle (1,1);

        \draw[->, very thick] (0, 1) -- (0, -0.1) node [left] {$s$};
        \draw[->, very thick] (0, 1) -- (1.1, 1) node [above] {$t$};

        \node [above left] at (0, 1) {\small $0$}; 
        \node [left] at (0, 0.7) {\small $\sigma_1$};  
        \node [left] at (0, 0.2) {\small $\sigma_2$};  

        \node [above] at (0.3, 1) {\small $\sigma_1$};
        \node [above] at (0.8, 1) {\small $\sigma_2$};

    \end{tikzpicture}
    \caption{}
    \label{sfig1:graphon}
     \end{subfigure}
\hfill
\begin{subfigure}{0.45\textwidth}
\centering
\begin{tikzpicture}[scale=1.3]

\node [circle,fill=black,inner sep=0.8pt] at (0,0) {};
\draw[->] (0,0) -- (1.55,0) node [above] {\footnotesize $x_2$};
\draw[->] (0,0) -- (0,1.55) node [right] {\footnotesize $x_3$};
\draw[->] (0,0) -- (-0.9,-0.9) node [left] {\footnotesize $x_1$};

\coordinate (A) at (-0.66,-0.66);
\coordinate (B) at (1.2,0);
\coordinate (C) at (0,1.2);

\coordinate (AB) at (0.27,-0.33);
\coordinate (AC) at (-0.33,0.27);
\coordinate (BC) at (0.6,0.6);

\filldraw[fill=gray!0] (A)--(B)--(C)--cycle;
\filldraw[fill=blue!20] (A)--(AB)--(BC)--(AC)--cycle;

\node at (-0.8,-0.2) {\small $\overline \cX$};

\node[circle,fill=blue,inner sep=0.8pt,label=left:{\scriptsize$a$}] at (-0.05,-0.05) {};
\node[circle,fill=blue,inner sep=0.8pt,label=left:{\scriptsize$b$}] at (0.25,0.24) {};
\node[circle,fill=blue,inner sep=0.8pt,label=left:{\scriptsize$c$}] at (0.41,0.40) {};

\node[circle,fill=red,inner sep=0.8pt,label={[yshift=-0.4cm]\scriptsize$d$}] at (0.72,0.09) {};
\node[circle,fill=red,inner sep=0.8pt,label={[yshift=-0.4cm]\scriptsize$e$}] at (0.61,0.145) {};
\node[circle,fill=red,inner sep=0.8pt,label={[yshift=-0.5cm]\scriptsize$f$}] at (0.51,0.18) {};

\end{tikzpicture}
\caption{}
\label{sfig1:edgecone}
\end{subfigure}

\vspace{0.35cm}

\begin{subfigure}{0.45\textwidth}
\centering
\begin{tikzpicture}[scale=1.0]
\tikzset{every loop/.style={}}
\node at (0.2,1.0) {$S$};

\node [circle,fill=black,inner sep=1.2pt,label=below:{$u_1$}] (u1) at (0, 0) {};
\node [circle,fill=black,inner sep=1.2pt,label=below:{$u_2$}] (u2) at (2, 0) {};
\node [circle,fill=black,inner sep=1.2pt,label=above:{$u_3$}] (u3) at (1, 1.5) {};

\path[draw,thick,shorten >=2pt,shorten <=2pt]
(u1) edge[loop left] (u1)
(u1) edge (u2)
(u1) edge (u3)
(u2) edge (u3);

\end{tikzpicture}
\caption{}
\label{sfig1:skeletongraph}
\end{subfigure}
\hfill
\begin{subfigure}{0.45\textwidth}
\centering
\begin{tikzpicture}

\node[align=left] at (0,0) {\small
\begin{tabular}{l}
$x^*_a = (1/2,\, 1/4,\, 1/4)$ \\
$x^*_b = (1/4,\, 3/8,\, 3/8)$ \\
$x^*_c = (1/8,\, 7/16,\, 7/16)$ \\
$x^*_d = (1/8,\, 3/4,\, 1/8)$ \\
$x^*_e = (3/16,\, 5/8,\, 3/16)$ \\
$x^*_f = (7/32,\, 18/32,\, 7/32)$
\end{tabular}
};

\end{tikzpicture}
\caption{}
\label{sfig1:coordinates}
\end{subfigure}

\caption{An example step-graphon $W_\star$ is shown in (a). The associated common skeleton graph is shown in (c), for $\star = a,b,c,d,e,f$. The shaded region in (b) represents $\overline \cX := \cX\cap \Delta^{2}$. The blue points $a$, $b$, and $c$ belong to $\rint \overline \cX$ and correspond to the concentration vectors $x_a^*$, $x_b^*$, and $x_c^*$, respectively. The three red points $d$, $e$, and $f$ belong to $\Delta^2 - \overline \cX$ and correspond to the concentration vectors $x_d^*$, $x_e^*$, and $x_f^*$, respectively. The explicit coordinates of these vectors are given in (d).}
\label{fig:case1&2}

\end{figure}

For the six graphons $W_\star$, where $\star = a, b, c, d, e, f$, a representative example is shown in Fig.~\ref{sfig1:graphon}. These graphons share the same skeleton graph $S$ displayed in Fig.~\ref{sfig1:skeletongraph}, but differ in the partitions $\sigma =(0, \sigma_1, \sigma_2, 1)$, which lead to different concentration vectors $x_\star^*$. The corresponding edge cone is shown in Fig.~\ref{sfig1:edgecone} with the coordinates of the associated six concentration vectors $x^*_\star$ presented in Fig.~\ref{sfig1:coordinates}.

\subsubsection{Validation of item 1}
We first consider the case $\star = a, b, c$, for which $$x^*_\star = \frac{1}{2}(2\alpha_\star, 1-\alpha_\star, 1-\alpha_\star),\quad \alpha_\star \in (0, 1).$$ We set $\alpha_a = \frac{1}{2}$, $\alpha_b = \frac{1}{4}$, $\alpha_c = \frac{1}{8}$. For all three choices, we have $x^*_\star \in  {\rint \cX}$. Hence, $W_\star$ satisfies Condition $B$, by Theorem~\ref{thm:convergencerate}, $P_n(W_\star)$ converges to 1 exponentially fast.

To verify the result, Fig.~\ref{Fig:insidecase} plots the $\log(1-p_\star(n))$ versus $n$ for $\star = a, b, c$. In all cases, the data are approximately linear, indicating that $1-p_\star(n)$ decays at an exponential rate. Furthermore, the fitted decay rates differ substantially among the cases: Case~(a) yields a faster rate of approximately $ e^{-0.163n}$, Case~(b) decays at about $e^{-0.036n}$, and Case~(c) exhibits a slower decay of $e^{-0.010n}$. This difference is explained by the location of \(x_\star^*\) inside the edge cone. More precisely, \(x_a^*\) lies well inside \(\cX\), \(x_c^*\) is much closer to \(\partial \cX\), and \(x_b^*\) lies between them. This is consistent with the fact that the convergence speed of the $H$-property is essentially determined by the event $x(G_n)\in \cX$.

\begin{figure}[htbp]
\centering
\begin{tikzpicture}[scale=0.8]
\begin{axis}[
    width=10cm,
    height=7cm,
    xlabel={Number of nodes $n$},
    ylabel={$\log(1-p_\star(n))$},
    xmin=5, xmax=210,
    ymin=-12, ymax=-0.2,
    grid=both,
    legend style={
    at={(0.47,0.25)},
    anchor=center,
    draw=black,
    fill=white,
    fill opacity=0.9,
    text opacity=1
},
]

\addplot[
    only marks,
    mark=*,
    mark size=2pt,
    blue
] coordinates {
    (10,-3.220876)
    (20,-4.849834)
    (30,-6.388961)
    (40,-7.929407)
    (50,-9.567015)
    (60,-11.512925)
};
\addlegendentry{$\log(1-p_a(n))$}

\addplot[
    domain=10:60,
    samples=100,
    thick,
    blue,
    forget plot
] {-0.163307*x - 1.528927};

\addplot[
    only marks,
    mark=*,
    mark size=2pt,
    orange
] coordinates {
    (10,-1.362034)
    (20,-1.774935)
    (30,-2.181194)
    (40,-2.583408)
    (50,-2.977109)
    (60,-3.362908)
    (70,-3.697686)
    (80,-4.093379)
    (90,-4.489803)
    (100,-4.855761)
    (110,-5.195181)
    (120,-5.521461)
    (130,-5.878136)
    (140,-6.377127)
    (150,-6.676643)
    (160,-6.897805)
    (170,-7.264430)
    (180,-7.775255)
    (190,-7.957577)
    (200,-8.111728)
};
\addlegendentry{$\log(1-p_b(n))$}

\addplot[
    domain=10:200,
    samples=100,
    thick,
    orange,
    forget plot
] {-0.036254*x - 1.144975};

\addplot[
    only marks,
    mark=*,
    mark size=2pt,
    green!70!black
] coordinates {
    (10,-0.749655)
    (20,-0.847616)
    (30,-0.962576)
    (40,-1.079235)
    (50,-1.199691)
    (60,-1.313999)
    (70,-1.425190)
    (80,-1.538098)
    (90,-1.645265)
    (100,-1.736668)
    (110,-1.845127)
    (120,-1.956779)
    (130,-2.057285)
    (140,-2.160313)
    (150,-2.245114)
    (160,-2.345656)
    (170,-2.441887)
    (180,-2.540614)
    (190,-2.641103)
    (200,-2.732303)
};
\addlegendentry{$\log(1-p_c(n))$}

\addplot[
    domain=10:200,
    samples=100,
    thick,
    green!70!black,
    forget plot
] {-0.010475*x - 0.673261};
\end{axis}
\end{tikzpicture}
\caption{Linear fit of $\log(1-p_\star(n))$ for $\star = a, b, c$.}
\label{Fig:insidecase}
\end{figure}
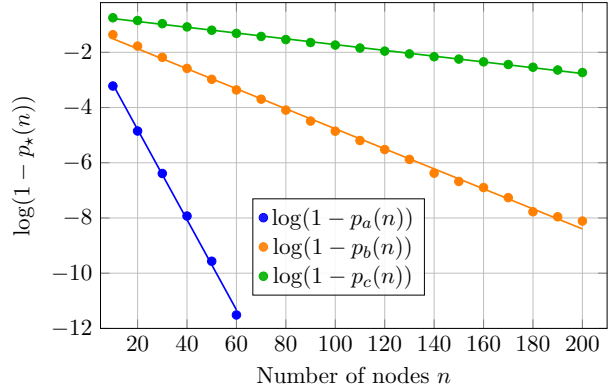

\begin{figure}[htbp]
\centering
\begin{tikzpicture}[scale=0.8]
\begin{axis}[
    width=10cm,
    height=7cm,
    xlabel={Number of nodes $n$},
    ylabel={$\log(p_\star(n))$},
    xmin=5, xmax=210,
    ymin=-12.5, ymax=-0.5,
    grid=both,
    legend style={
    at={(0.5,0.25)},
    anchor=center,
    draw=black,
    fill=white,
    fill opacity=0.9,
    text opacity=1
},
]
\addplot[
    only marks,
    mark=*,
    mark size=2pt,
    blue
] coordinates {
    (10,-2.551046)
    (20,-4.287679)
    (30,-5.878136)
    (40,-7.372437)
    (50,-9.433484)
    (60,-10.414313)
    (70,-11.512925)
};
\addlegendentry{$\log(p_d(n))$}

\addplot[
    domain=10:70,
    samples=100,
    thick,
    blue,
    forget plot
] {-0.152470*x - 1.250932};

\addplot[
    only marks,
    mark=*,
    mark size=2pt,
    orange
] coordinates {
    (10,-1.210338)
    (20,-1.740466)
    (30,-2.201958)
    (40,-2.628050)
    (50,-3.058893)
    (60,-3.421740)
    (70,-3.809045)
    (80,-4.184557)
    (90,-4.571245)
    (100,-4.878815)
    (110,-5.295613)
    (120,-5.692483)
    (130,-5.962739)
    (140,-6.265901)
    (150,-6.802395)
    (160,-6.949856)
    (170,-7.369199)
    (180,-7.901007)
    (190,-8.145630)
    (200,-8.294050)
};
\addlegendentry{$\log(p_e(n))$}

\addplot[
    domain=10:200,
    samples=100,
    thick,
    orange,
    forget plot
] {-0.037124*x - 1.121094};

\addplot[
    only marks,
    mark=*,
    mark size=2pt,
    green!70!black
] coordinates {
    (10,-0.795206)
    (20,-1.004520)
    (30,-1.201196)
    (40,-1.337690)
    (50,-1.482060)
    (60,-1.615706)
    (70,-1.743587)
    (80,-1.868802)
    (90,-1.973858)
    (100,-2.103734)
    (110,-2.200487)
    (120,-2.317538)
    (130,-2.437030)
    (140,-2.535648)
    (150,-2.617610)
    (160,-2.731213)
    (170,-2.808946)
    (180,-2.932971)
    (190,-3.034897)
    (200,-3.131211)
};
\addlegendentry{$\log(p_f(n))$}

\addplot[
    domain=10:200,
    samples=100,
    thick,
    green!70!black,
    forget plot
] {-0.011753*x - 0.859716};

\end{axis}
\end{tikzpicture}
\caption{Linear fit of $\log(p_\star(n))$ for $\star = d, e, f$.}
\label{Fig:outsidecase}
\end{figure}
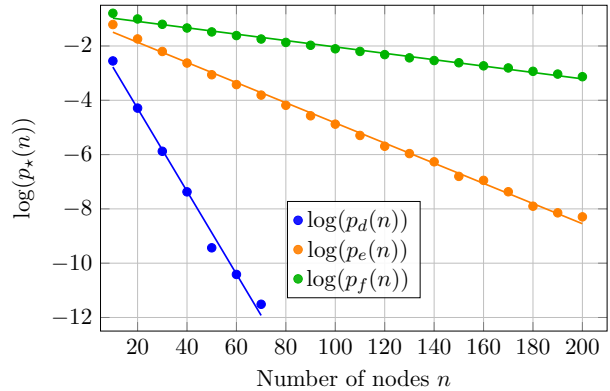

\subsubsection{Validation of item~2 with Condition $A$ satisfied}
We next consider the case $\star = d, e, f$, where $$x^*_\star = \frac{1}{2}(1-\beta_\star, 2\beta_\star, 1-\beta_\star),\quad \beta_\star \in(0, 1).$$ We set $\beta_d = \frac{3}{4}$, $\beta_e = \frac{5}{8}$, $\beta_f = \frac{18}{32}$. For these three graphons, $x^*_\star \notin  {\cX}$, so Condition $B'$ fails, and hence $P_n(W_\star)\to 0$. In addition, Fig.~\ref{sfig1:skeletongraph} contains the odd cycle $(u_1, u_1)$, so Condition $A$ satisfied.

Fig.~\ref{Fig:outsidecase} plots $\log(p_\star(n))$ with respect to $n$ for $\star = d, e, f$. Again, the curves are approximately linear, indicating the exponential decay. The fitted rates are approximately $e^{-0.152n}$, $e^{-0.037n}$, $e^{-0.012n}$ for Case~(d), (e) and (f), respectively. Among the three cases$,$ \(x_d^*\) is furthest from \(\cX\)$,$ while \(x_f^*\) is closest to it. Accordingly$,$ the decay becomes slower as \(x_\star^*\) approaches the boundary of the edge cone. This again supports the conclusion that the geometry of \(\cX\) plays the central role in determining the convergence rate.

\subsubsection{Validation of item~2 with Condition $A$ not satisfied}
We now verify the remaining part of item~2 using three additional step-graphons $W_\star$, for $\star = g, h, i$. An example is shown in Fig.~\ref{sfig4:Wg}. Their concentration vectors are of the form: $$x^*_\star = (\sigma_\star, 1-\sigma_\star), \quad\sigma_\star \in (0, 1).$$ We set $\sigma_g = \frac{7}{16}$, $\sigma_h = \frac{3}{8}$, $\sigma_i = \frac{1}{4}$, respectively. In this case, $\cX = \{(1/2,\,1/2)\}$, and 
$x_\star \notin \cX$ for all three choices. The associated skeleton graph $S$ is given as $S = (V(S),E(S))$, where $V(S) = \{u_1,u_2\}$ and $E(S) = \{(u_1,u_2)\}$ which contain no odd cycle, while still $P_n(W_\star) \to 0$.

Figure~\ref{fig:ghi_exponential_fit} plots $\log(p_\star(n))$ versus $n$ for $\star=g,h,i$. The data again follow an approximately linear trend, confirming exponential decay. The fitted rates are about $e^{-0.014n}$, $e^{-0.039n}$, and $e^{-0.173n}$ for Cases~(g), (h), and (i), respectively. The difference in rates is explained by the relative positions of $x^*$ with respect to $\cX$.

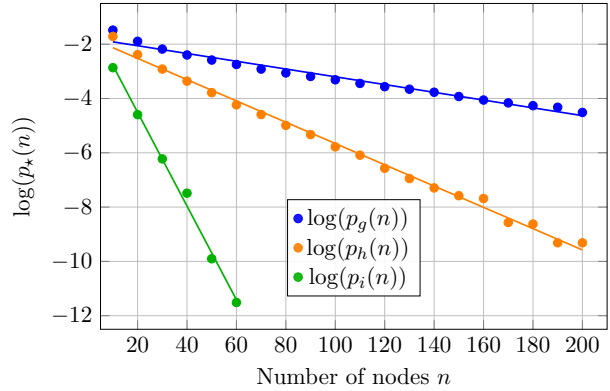
\begin{figure}[htbp]
\centering
\begin{tikzpicture}[scale=0.8]
\begin{axis}[
    width=10cm,
    height=7cm,
    xlabel={Number of nodes $n$},
    ylabel={$\log(p_\star(n))$},
    xmin=5, xmax=210,
    ymin=-12.5, ymax=-0.5,
    grid=both,
    legend style={
        at={(0.5,0.25)},
        anchor=center,
        draw=black,
        fill=white,
        fill opacity=0.9,
        text opacity=1
    },
]

\addplot[
    only marks,
    mark=*,
    mark size=2pt,
    blue
] coordinates {
    (10,-1.482233)
    (20,-1.894257)
    (30,-2.179129)
    (40,-2.397116)
    (50,-2.583888)
    (60,-2.746687)
    (70,-2.916736)
    (80,-3.058033)
    (90,-3.189074)
    (100,-3.311265)
    (110,-3.445776)
    (120,-3.564893)
    (130,-3.657768)
    (140,-3.769656)
    (150,-3.926629)
    (160,-4.057049)
    (170,-4.162409)
    (180,-4.264421)
    (190,-4.323003)
    (200,-4.514416)
};
\addlegendentry{$\log(p_g(n))$}

\addplot[
    domain=10:200,
    samples=100,
    thick,
    blue,
    forget plot
] {-0.014300*x - 1.770772};

\addplot[
    only marks,
    mark=*,
    mark size=2pt,
    orange
] coordinates {
    (10,-1.712246)
    (20,-2.385206)
    (30,-2.915813)
    (40,-3.361304)
    (50,-3.784950)
    (60,-4.233607)
    (70,-4.590282)
    (80,-4.990833)
    (90,-5.328777)
    (100,-5.782826)
    (110,-6.087975)
    (120,-6.571283)
    (130,-6.948577)
    (140,-7.293418)
    (150,-7.581100)
    (160,-7.684284)
    (170,-8.568486)
    (180,-8.622554)
    (190,-9.315701)
    (200,-9.315701)
};
\addlegendentry{$\log(p_h(n))$}

\addplot[
    domain=10:200,
    samples=100,
    thick,
    orange,
    forget plot
] {-0.039199*x - 1.737888};

\addplot[
    only marks,
    mark=*,
    mark size=2pt,
    green!70!black
] coordinates {
    (10,-2.865230)
    (20,-4.594230)
    (30,-6.224658)
    (40,-7.487574)
    (50,-9.903488)
    (60,-11.512925)
};
\addlegendentry{$\log(p_i(n))$}

\addplot[
    domain=10:60,
    samples=100,
    thick,
    green!70!black,
    forget plot
] {-0.172655*x - 1.055101};

\end{axis}
\end{tikzpicture}
\caption{Linear fit of $\log(p_\star(n))$ for $\star = g, h, i$.}
\label{fig:ghi_exponential_fit}
\end{figure}

\subsection{Validation of items~3 and~4}
In this subsection, we illustrate the $n^{-1/2}$ convergence rate by graphons $W_j$ and $W_k$ shown in Fig.~\ref{fig:graphongh}.

\subsubsection{Validation of item~3}
We first consider the graphon $W_j$ shown in Fig.~\ref{sfig4:Wg}. Its concentration vector $x_j^* = (1/2,\,1/2) \in \cX = \{(1/2,\,1/2)\}$, i.e, $\sigma_j = \frac{1}{2}$.
Therefore, $W_j$ satisfies Condition $B'$ but not Condition $A$. By Theorem~\ref{thm:convergencerate}, $P_n(W_j) \to 0$ as $n \to \infty$, and decay rate should be of order $n^{-1/2}$.

In Fig.~\ref{Fig:loglog_pj_fit}, we plot $\log p_j(n)$ versus $\log n$ and perform a linear fit. The fitted slope is approximately $-0.510$ which is close to $-1/2$, indicating the convergence rate $n^{-1/2}$ of $P_n(W_j)$.

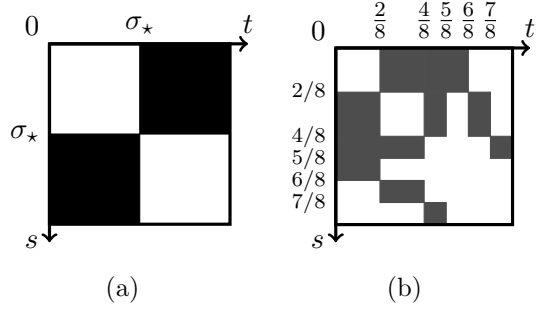
\begin{figure}[t]
    \centering
    \subfloat[\label{sfig4:Wg}]{
\begin{tikzpicture}[scale=2.4]

    \filldraw[fill=black!0, draw=black!0] (0,1) rectangle (0.5,0.5);

    \filldraw[fill=black!0, draw=black!0] (0.5,0.5) rectangle (1,0);

    \filldraw[fill=black, draw=black] (0.5,1) rectangle (1,0.5);

    \filldraw[fill=black, draw=black] (0,0.5) rectangle (0.5,0);

    \draw[draw=black, very thick] (0,0) rectangle (1,1);
  
    \draw[->, very thick] (0,1) -- (0,-0.1) node[left] {$s$};
    \draw[->, very thick] (0,1) -- (1.1,1) node[above] {$t$};

    \node[left] at (0,0.5) {$\sigma_\star$};
    \node[above] at (0.5,1) {$\sigma_\star$};

    \node[left] at (0, 1.1) {$0$};
\end{tikzpicture}
}
\centering
\subfloat[\label{sfig4:Wh}]{
   \begin{tikzpicture}[scale=2.35]

    \filldraw [fill=black!0, draw=black!0] (0, 0.25) rectangle (0.25, 0.125); 
    \filldraw [fill=black!0, draw=black!0] (0.75, 1) rectangle (0.875, 0.75); 

    \filldraw [fill=black!0, draw=black!0] (0, 0.125) rectangle (0.25, 0); 
    \filldraw [fill=black!0, draw=black!0] (0.875, 1) rectangle (1, 0.75); 

    \filldraw [fill=black!0, draw=black!0] (0.25, 0.375) rectangle (0.5, 0.25); 
    \filldraw [fill=black!0, draw=black!0] (0.625, 0.75) rectangle (0.75, 0.5); 

    \filldraw [fill=black!0, draw=black!0] (0.25, 0.125) rectangle (0.5, 0); 
    \filldraw [fill=black!0, draw=black!0] (0.875, 0.75) rectangle (1, 0.5); 

    \filldraw [fill=black!0, draw=black!0] (0.5, 0.375) rectangle (0.625, 0.25); 
    \filldraw [fill=black!0, draw=black!0] (0.625, 0.5) rectangle (0.75, 0.375); 

    \filldraw [fill=black!0, draw=black!0] (0.5, 0.25) rectangle (0.625, 0.125); 
    \filldraw [fill=black!0, draw=black!0] (0.75, 0.5) rectangle (0.875, 0.375); 

    \filldraw [fill=black!0, draw=black!0] (0.75, 0.375) rectangle (0.875, 0.25); 
    \filldraw [fill=black!0, draw=black!0] (0.625, 0.25) rectangle (0.75, 0.125); 

    \filldraw [fill=black!0, draw=black!0] (0.75, 0.125) rectangle (0.875, 0); 
    \filldraw [fill=black!0, draw=black!0] (0.875, 0.25) rectangle (1, 0.125); 

    \filldraw [fill=black!0, draw=black!0] (0,1) rectangle (0.25, 0.75); 

    \filldraw [fill=black!0, draw=black!0] (0.25, 0.75) rectangle (0.5, 0.5);

    \filldraw [fill=black!0, draw=black!0] (0.5, 0.5) rectangle (0.625, 0.375);

    \filldraw [fill=black!0, draw=black!0] (0.625, 0.375) rectangle (0.75, 0.25);

    \filldraw [fill=black!0, draw=black!0] (0.75, 0.25) rectangle (0.875, 0.125);

    \filldraw [fill=black!0, draw=black!0] (0.875, 0.125) rectangle (1, 0);


    \filldraw [fill=black!70, draw=black!70] (0, 0.75) rectangle (0.25, 0.5); 
    \filldraw [fill=black!70, draw=black!70] (0.25, 1) rectangle (0.5, 0.75); 

    \filldraw [fill=black!70, draw=black!70] (0, 0.5) rectangle (0.25, 0.375); 
    \filldraw [fill=black!70, draw=black!70] (0.5, 1) rectangle (0.625, 0.75); 

    \filldraw [fill=black!70, draw=black!70] (0, 0.375) rectangle (0.25, 0.25); 
    \filldraw [fill=black!70, draw=black!70] (0.625, 1) rectangle (0.75, 0.75); 

    \filldraw [fill=black!70, draw=black!70] (0.25, 0.5) rectangle (0.5, 0.375); 
    \filldraw [fill=black!70, draw=black!70] (0.5, 0.75) rectangle (0.625, 0.5); 

    \filldraw [fill=black!70, draw=black!70] (0.25, 0.25) rectangle (0.5, 0.125); 
    \filldraw [fill=black!70, draw=black!70] (0.75, 0.75) rectangle (0.875, 0.5); 

    \filldraw [fill=black!70, draw=black!70] (0.5, 0.125) rectangle (0.625, 0); 
    \filldraw [fill=black!70, draw=black!70] (0.875, 0.5) rectangle (1, 0.375); 
        \draw [draw=black, very thick] (0,0) rectangle (1,1);

        \draw[->, very thick] (0, 1) -- (0, -0.1) node [left] {$s$};
        \draw[->, very thick] (0, 1) -- (1.1, 1) node [above] {$t$};

        \node [left] at (0, 0.75) {\scriptsize$2/8$};
        \node [left] at (0, 0.5) {\scriptsize$4/8$};
        \node [left] at (0, 0.375) {\scriptsize$5/8$};
        \node [left] at (0, 0.25) {\scriptsize$6/8$};
        \node [left] at (0, 0.125) {\scriptsize$7/8$};

        \node [above] at (0.25, 1) {$\frac{2}{8}$};
        \node [above] at (0.5, 1) {$\frac{4}{8}$};
        \node [above] at (0.625, 1) {$\frac{5}{8}$};
        \node [above] at (0.75, 1) {$\frac{6}{8}$};
        \node [above] at (0.875, 1) {$\frac{7}{8}$};

        \node [left] at (0, 1.1) {$0$}; 

    \end{tikzpicture}
}
\caption{(a) A representative illustration of the step-graphon $W_\star$ for $\star \in \{g,h,i,j\}$; the visualization is generated with $\sigma_\star = \frac{1}{2}$ for simplicity. (b) The step-graphon $W_k$.}
\label{fig:graphongh}
\end{figure}

\begin{figure}[htbp]
\centering
\begin{tikzpicture}[scale=0.8]
\begin{axis}[
    width=10cm,
    height=6.2cm,
    xlabel={$\log n$},
    ylabel={$\log p_j(n)$},
    xmin=4.5, xmax=7.15,
    ymin=-3.9, ymax=-2.4,
    grid=both,
    legend style={
        at={(0.98,0.95)},
        anchor=north east,
        draw=gray,
        fill=white
    },
]

\addplot[
    blue,
    only marks,
    mark=*,
    mark size=2pt
] coordinates {
    (4.60517019,-2.5098553)
    (5.29831737,-2.87316072)
    (5.70378247,-3.09887303)
    (5.99146455,-3.22087783)
    (6.21460810,-3.33092533)
    (6.39692966,-3.42713076)
    (6.55108034,-3.50655790)
    (6.68461173,-3.55225286)
    (6.80239476,-3.66282192)
    (6.90775528,-3.69449519)
    (7.00306546,-3.75075486)
    (7.09007684,-3.76878884)
};
\addlegendentry{$\log(p_j(n))$}

\addplot[
    red,
    thick,
    domain=4.60517019:7.09007684,
    samples=200
]
{-0.509567978682325*x - 0.17099024004626778};
\addlegendentry{Fit: $-0.510\,\log n - 0.171$}

\end{axis}
\end{tikzpicture}
\caption{Plot of $\log(p_j(n))$ versus $\log n$ with its linear fit.}
\label{Fig:loglog_pj_fit}
\end{figure}
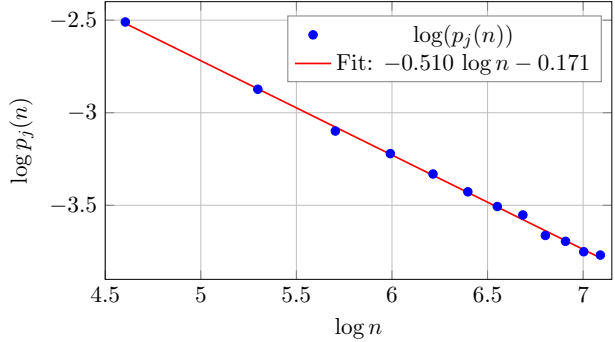

\subsubsection{Validation of item~4}

Finally, we consider the step-graphon $W_k$ shown in Fig.~\ref{sfig4:Wh}, it can be verified that the concentration vector $x^*_k \in \partial\cX$, so $P_n(W_k) \to p^*$, with $p^* = 0.1669$ (See Example~2 in \cite{gao2025h}). To examine the convergence rate, we plot $\log |p_k(n)-p^*|$ versus $\log n$ and perform a linear fit in Fig.~\ref{Fig:loglog_pk_boundary}. The fitted line is
$\log |p_k(n)-p^*| = -0.510 \log n - 0.883,$
which indicates that $|p_k(n)-p^*|$ decays at the rate $n^{-1/2}$.

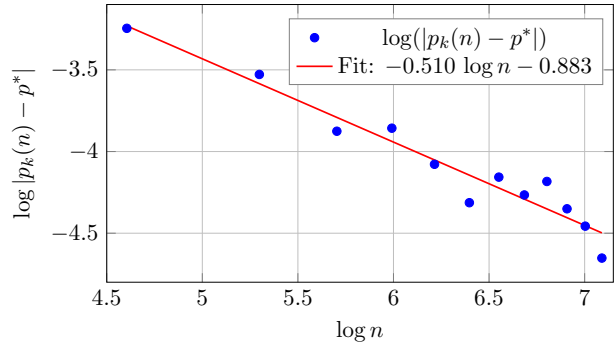
\begin{figure}[htbp]
\centering
\begin{tikzpicture}[scale=0.8]
\begin{axis}[
    width=10cm,
    height=6.2cm,
    xlabel={$\log n$},
    ylabel={$\log |p_k(n)-p^*|$},
    xmin=4.5, xmax=7.15,
    ymin=-4.8, ymax=-3.1,
    grid=both,
    legend style={
        at={(0.98,0.95)},
        anchor=north east,
        draw=gray,
        fill=white
    },
]

\addplot[
    blue,
    only marks,
    mark=*,
    mark size=2pt
] coordinates {
    (4.60517019,-3.24573327)
    (5.29831737,-3.52812209)
    (5.70378247,-3.87569112)
    (5.99146455,-3.85706141)
    (6.21460810,-4.07807754)
    (6.39692966,-4.31324707)
    (6.55108034,-4.15664555)
    (6.68461173,-4.26584482)
    (6.80239476,-4.18317583)
    (6.90775528,-4.35052801)
    (7.00306546,-4.45675022)
    (7.09007684,-4.65226172)
};
\addlegendentry{$\log(|p_k(n)-p^*|)$}

\addplot[
    red,
    thick,
    domain=4.60517019:7.09007684,
    samples=200
]
{-0.5098822801542713*x - 0.882906337786286};
\addlegendentry{Fit: $-0.510\,\log n - 0.883$}

\end{axis}
\end{tikzpicture}
\caption{Plot of $\log(|p_k(n)-p^*|)$ versus $\log(n)$ with its linear fit, where $p^* = 0.1669$.}
\label{Fig:loglog_pk_boundary}
\end{figure}
Taken together, the experiments in this section clearly illustrate the two regimes stated by Theorem~\ref{thm:convergencerate}: exponential convergence for item~1 and item~2, and the $n^{-1/2}$ order convergence in item~3 and item~4. They also show that in the exponential regime, the rate is strongly influenced by the location of $x^*$ relative to $\cX$.

\section{Conclusions}\label{sec:conclusions}

In this paper, we have investigated the convergence rate of the probability $P_n(W)$ that $G_n\sim W$ has a cycle cover. 
We have demonstrated that there are two different types of convergence rates, namely, exponential convergence and root-$n$ convergence, 
which depend on the geometric relation between the concentration vector $x^*$ and the edge cone $\cX$. 
Specifically, if there exists an open neighborhood $U$ of $x^*$ in $\Delta^{q-1}$ such that either $U\subseteq \rint \cX$ or $U\subseteq \Delta^{q-1} - \cX$, then $P_n(W)$ converges to its limit value exponentially fast. Otherwise, the convergence rate is $n^{-1/2}$.
We have also validated the results through numerical simulations.

\printbibliography
\end{document}